\theoremstyle{plain}
\newtheorem{thm}{Theorem}
\newtheorem{lem}[thm]{Lemma}
\newtheorem{constthm}[thm]{Construction--Theorem}
\newtheorem{cor}[thm]{Corollary}
\newtheorem{prop}[thm]{Proposition}
\theoremstyle{definition}
\newtheorem{defn}[thm]{Definition}
\newenvironment{examples}[1][]{\refstepcounter{thm}\noindent{\textbf{Examples \thethm{}. #1}}}{}
\newtheorem{rmk}[thm]{Remark}
\newtheorem{rmks}[thm]{Remarks}
\numberwithin{thm}{section}
\numberwithin{equation}{section}
\renewcommand{\emptyset}{\varnothing}
\renewcommand{\epsilon}{\varepsilon}
\newcommand{\otimeshat}{{\widehat{\otimes}}}
\newcommand{\divise}{\mkern2mu|\mkern2mu}
\newcommand{\tors}[2]{{\vphantom{#2}}_{#1}{#2}}
\newcommand{\aKa}{$\tors{a}{\mathcal{K}}_a$}
\newcommand{\pitop}{\pi_1^{{\rm top}}}
\newcommand{\pitildetop}{\pitilde^{{\rm top}}}
\newcommand{\Xtildetop}{\Xtilde^{{\rm top}}}
\newcommand{\Br}{{\mathrm{Br}}}
\newcommand{\Gm}{\mathbf{G}_\mathrm{m}}
\newcommand{\Xhat}{{\hat X}}
\newcommand{\Zhat}{{\hat \Z}}
\newcommand{\Vhat}{{\hat V}}
\newcommand{\kpi}{$K(\pi,1)$}
\newcommand{\kbar}{{\bar k}}
\newcommand{\Xbar}{{\bar X}}
\newcommand{\Ybar}{{\bar Y}}
\newcommand{\Zbar}{{\bar Z}}
\newcommand{\Xtilde}{\tilde{X}}
\newcommand{\atilde}{\tilde{a}}
\newcommand{\alphatilde}{\tilde{\alpha}}
\newcommand{\pitilde}{\tilde{\pi}}
\newcommand{\mmu}{{\boldsymbol{\mu}}}
\newcommand{\Pic}{{\rm Pic}}
\newcommand{\Et}{{\sf Et}}
\newcommand{\Sets}{{\sf Sets}}
\newcommand{\Spec}{{\rm Spec}}
\newcommand{\Gal}{{\rm Gal}}
\newcommand{\sB}{{\mathcal B}}
\newcommand{\sC}{{\mathcal C}}
\newcommand{\sP}{{\mathcal P}}
\newcommand{\R}{{\mathbf R}}
\newcommand{\C}{{\mathbf C}}
\renewcommand{\H}{H}
\newcommand{\Q}{{\mathbf Q}}
\newcommand{\Z}{{\mathbf Z}}
\newcommand{\Zl}{{\Z_{\ell}}}
\newcommand{\Ql}{{\Q_{\ell}}}
\newcommand{\sX}{{\mathcal X}}
\def\tilde{\widetilde}
\title[]{Remarks on cycle classes of sections of the arithmetic fundamental group}
\author{H\'el\`ene Esnault}
\address{Universit\"at Duisburg--Essen, Mathematik, 45117 Essen, Germany}
\email{esnault@uni-due.de}
\author{Olivier Wittenberg}
\address{Institut de Recherche Math\'ematique Avanc\'ee, CNRS \ndash{} Universit\'e Louis Pasteur, 7 rue Ren\'e Descartes, 67084 Strasbourg Cedex, France}
\curraddr{D\'epartement de math\'ematiques et applications, \'Ecole normale sup\'erieure, 45~rue d'Ulm, 75320 Paris Cedex 05, France}
\email{wittenberg@dma.ens.fr}
\thanks{Partially supported by the DFG Leibniz Preis and the SFB/TR 45}
\date{December 18, 2008}
\dedicatory{\`A Pierre Deligne, avec reconnaissance et admiration.}
\subjclass[2000]{Primary 14F35; Secondary 14C25, 14F20}
\keywords{\'Etale fundamental group, cycle class map, pronilpotent completion}
\begin{document}
\parindent0cm
\parskip5pt
\begin{abstract}
Given a smooth and separated~\kpi{} variety~$X$ over a field~$k$, we associate a ``cycle class'' in
\'etale cohomology with compact supports to any continuous section of the natural map from the arithmetic fundamental group of~$X$ to the absolute Galois group of~$k$.
We discuss the algebraicity of this class in the case of curves over $p$\nobreakdash-adic fields.
Finally, an \'etale adaptation  of Beilinson's geometrization of the pronilpotent completion of the topological fundamental group allows us to lift this cycle class in suitable cohomology groups. 
\end{abstract}
\maketitle

\section{Introduction}
\label{secintro}
Let~$X$ be a geometrically connected variety over a field~$k$ of characteristic~$0$.  Denote by~$\kbar$ an algebraic closure of~$k$. Let $\Xbar = X \otimes_k \kbar$
and $G_k=\Gal(\kbar/k)$.

Grothendieck considered in~\cite{sga1} the category $\Et(X)$ of \'etale covers of~$X$ (\emph{i.e.},~of finite \'etale morphisms of schemes $\pi \colon Y \rightarrow X$)
and used it to define the fundamental group $\pi_1(X,x)$ of~$X$ based at a geometric point~$x$ of~$X$ as the automorphism group
of the fiber functor $\Et(X) \rightarrow \Sets$, $\pi \mapsto \pi^{-1}(x)$.  The structure morphism $\epsilon \colon X \rightarrow \Spec(k)$ induces
a map $\epsilon_\star\colon\pi_1(X,x) \rightarrow G_k$, since $\pi_1(\Spec(k),x)$ is canonically isomorphic to~$G_k$.  Grothendieck proved that~$\epsilon_\star$ is onto and that its kernel
identifies with $\pi_1(\Xbar,x)$. Moreover, for any other geometric point~$x'$ of~$X$, there exists an isomorphism $\pi_1(X,x)\simeq \pi_1(X,x')$ which is compatible
with the projections to~$G_k$.
Any rational point $a \in X(k)$ therefore induces a section $a_\star \colon G_k \rightarrow \pi_1(X,x)$ of~$\epsilon_\star$, well-defined up to conjugacy by $\pi_1(\Xbar,x)$
(indeed, $a$ induces a canonical section of $\pi_1(X,a)\rightarrow G_k$).

Let~$G_{k(X)}$ denote the absolute Galois group of~$k(X)$.
The inclusion of the generic point of~$X$ induces a map $G_{k(X)} \rightarrow \pi_1(X,x)$
which is compatible with the projections to~$G_k$ and which is well-defined up to conjugacy by $\pi_1(\Xbar,x)$.
In \cite[Section~15]{DeP}, Deligne proves that for $a \in X(k)$, the section~$a_\star$ admits liftings to~$G_{k(X)}$, as pictured below:
$$
\xymatrix{
  G_{k(X)} \ar[d]\\
  \pi_1(X,x) \ar[r]^(.62){\epsilon_*}  &  \ar@<1ex>[l]^(.37){a_\star} G_k \ar@{.>}[lu]
}
$$
In terms of Galois groups, this can be seen as follows.  Assume for simplicity that~$X$ is a curve
(Deligne writes ``Par lassitude, nous ne traiterons que du cas o\`u $X$ est de dimension~$1$'').
The choice of a local parameter~$t$ of~$X$ at~$a$ determines an isomorphism $k(X)_a \simeq k((t))$, where $k(X)_a$ denotes the completion of~$k(X)$ at~$a$;
hence an embedding $k(X) \subset K$, where $K=\bigcup_{n\geq 1}k((t^{1/n}))$. The resulting map between absolute Galois
groups $G_K \rightarrow G_{k(X)}$ then provides a lifting of~$a_\star$. Indeed, the map $G_K \rightarrow G_k$ induced by the
inclusion $k \subset K$ is an isomorphism. 

The construction just described is easily seen to depend on the local parameter~$t$ only to order~$1$,
\emph{i.e.},~it only depends on the choice of a tangent vector~$\tau$ to~$X$ at~$a$.
Thus Deligne's {\it tangential base points} $(a, \tau)$ induce splittings of $G_{k(X)} \rightarrow G_k$.
Equivalently, for any dense open $U\subseteq X$, one can say that the datum of a nonzero tangent vector~$\tau$ on~$X$ at~$a$
enables one to define the fundamental group $\pi_1(U,(a,\tau))$ of~$U$ based at~$(a,\tau)$
as the automorphism group of the fiber functor
$\Et(U) \rightarrow \Sets$, $\pi \mapsto \pi^{-1}(\bar \eta)$ where $\bar \eta=\Spec\left(\bigcup_{n\geq 1}\kbar((t^{1/n}))\right)$.

Deligne's tangential base points~$(a,\tau)$ produce sections $G_k\to G_{k(X)}$ which factor through $G_{k(X)_a}$.
For a given $a \in X(k)$, the set of all sections (up to conjugacy) which satisfy this property is naturally a torsor under the group $H^1(k,\Zhat(1)):=\varprojlim_{n\geq 1} H^1(k, \mmu_n)$.
Those sections which in addition come from a tangential base point form a subtorsor under the image of $H^1(k,\Z(1)):=k^\times$ in $H^1(k,\Zhat(1))$;
one could say that they are motivic, as opposed to profinite.

In this note, given a smooth and geometrically connected separated variety~$X$ of dimension~$d$ over a field~$k$,
under the assumption that~$X$ is a~\kpi{} variety (see Definition~\ref{defn2.1} or~\cite[Appendix~A]{Stix})
we associate
to any section  $s \colon G_k \to \pi_1(X,x)$
a class in the \'etale cohomology group with compact supports $H^{2d}_c(X, \Z/N\Z(d))$  for any~$N$ invertible in~$k$
(see Theorem~\ref{thm2.5}).  We call it the cycle class of~$s$.
(Such a class had been considered by Mochizuki~\cite[Introduction, Structure of the Proof, (1)]{Mo}
in the case where~$X$ is proper and has dimension~$1$.  Theorem~\ref{thm2.5} below, on the other hand, may be used to associate cycle classes, on open varieties,
to sections~$s$ coming from tangential base points and even from ``profinite'' tangent vectors at infinity as alluded to in the previous paragraph.)
The construction we give resembles for \'etale cohomology with torsion coefficients Bloch's
decomposition of the diagonal for cohomology with nontorsion coefficients.
According to the latter,
 if the Chow group of $0$\nobreakdash-cycles satisfies $CH_0(X\otimes_k k(X)) \otimes_\Z {\Q}=\Q$, then, up to a cycle whose restriction to $X\otimes_k k(X)$ vanishes,
some nonzero multiple of the diagonal~$\Delta$ is rationally equivalent, on~$X \times X$, to $\alpha\times X$ for some $0$\nobreakdash-cycle~$\alpha$ on~$X$.
In Theorem~\ref{thm2.5} below, the \kpi{} assumption implies that 
 up to a class whose
pullback to $X\times X_s$ vanishes, where $X_s \rightarrow X$ denotes the $k$\nobreakdash-form of the universal cover of~$\Xbar$ determined by~$s$,
the cohomology class of~$\Delta$ is equal to $\alpha\times X$ for some $\alpha \in H^{2d}_c(X,\Z/N\Z(d))$.

If~$s$ is the section associated to a rational point $a \in X(k)$, then the cycle class of~$s$ in $H^{2d}_c(X,\Z/N\Z(d))$ coincides with the cycle class of~$a$.
As a consequence, in any situation in which one might hope that all sections come from rational points (for instance, Grothendieck's section conjecture predicts that this should be the case
if~$X$ is a proper curve of genus $g \geq 2$ and~$k$ is a finitely generated extension of~$\Q$), one first has to prove that the cycle class of any section~$s$ is actually the cycle class
of an algebraic cycle.  We consider this question in~\textsection\ref{sectionalgebraicity} under the assumption that~$X$ is a smooth proper curve over a $p$\nobreakdash-adic field~$k$.
With these hypotheses, Koenigsmann~\cite{Ko} established that if
$s \colon G_k \rightarrow \pi_1(X,x)$ is a section, then~$s$ comes from a rational point of~$X$ as soon as~$s$ lifts to a section of the natural map $G_{k(X)} \rightarrow G_k$
(this condition is necessary thanks to Deligne's construction).  The proof builds upon model-theoretic results due to Pop.
In Proposition~\ref{propkoenig} we show that in the situation of Koenigsmann's theorem, the liftings
of the cycle class $\alpha \in H^2(X,\Z/N\Z(1))$
of~$s$ to $H^2_c(U,\Z/N\Z(1))$ given by Theorem~\ref{thm2.5}
(for all dense open subsets $U \subseteq X$) 
 may be used to give a direct proof
of the weaker statement that~$\alpha$ is the cycle class of a divisor of degree~$1$ on~$X$ (well-defined up to linear equivalence).
We then investigate the algebraicity of the cycle class~$\alpha$ of~$s$ in the absence of any birational hypothesis.
Using Lichtenbaum's results about the period and the index of curves over $p$\nobreakdash-adic fields,
Stix~\cite{Stix2} was able to show that if~$X$ has genus~$g\ge 1$ and $\pi_1(X,x) \rightarrow G_k$ admits a section, then the index and the period of $X$ are powers of~$p$.
In the remainder of~\textsection\ref{sectionalgebraicity}, we prove that for any~$N$ which is prime to~$p$, the cycle class of any section $s \colon G_k \rightarrow \pi_1(X,x)$
is an algebraic cycle class.  From this we deduce in particular a new proof of Stix's theorem.

The last part of this note discusses a possible way to lift the cycle class
of a section $s \colon G_k\rightarrow \pi_1(X,x)$ to
cohomology groups which take into account the pronilpotent completion of
$\pi_1(\bar X,x)$. Over the field of complex numbers,
Beilinson~\cite[Section~3]{DelGon} constructed a cosimplicial
scheme~$\sP_a(X)$ with the property
that the Hopf algebra  $\varinjlim_{n\geq 1} {\rm Hom}_{\Q}(\Q[\pitop(X(\C), a)]/I^{n+1}, \Q)$
arises from the cohomology
of~$\sP_a(X)$,
where $\pitop(X(\C), a)$ denotes the topological fundamental group based at a point $a \in X(\C)$ and~$I$ denotes the augmentation ideal.
In~\textsection\ref{section4} we adapt part of Beilinson's description of $\Q[\pitop(X(\C),a)]/I^{n+1}$ to the \'etale fundamental group. The purpose is to replace the rational point $a$ by
an abstract section $s: G_k\to \pi_1(X,x)$ of Grothendieck's arithmetic fundamental group. If~$X$ has dimension~$1$, this section~$s$ does not have to come from
one of Deligne's tangential base points.
The only assumption we need to make is that~$X$ is a~\kpi{}, for example~$X$ could be a smooth proper curve of genus~$\ge 1$.
We~closely follow the topological description in~\cite[\textsection3.3, \textsection3.4]{DelGon}.
There, the authors express in cohomological terms not the $\Q$\nobreakdash-vector space $\Q[\pitop(X(\C),a)]/I^{n+1}$
but its dual. The latter turns out to coincide with the hypercohomology of the complex~\aKa{} described in {\it loc.\ cit.}
It is unclear how to define an analogous complex if one replaces~$a$ by an abstract section $s$. However, 
under the \kpi{} assumption, one may replace~$a$ by the $k$\nobreakdash-form of the universal cover of~$\Xbar$ defined by $s$ to obtain a complex, albeit on a larger space,
which gives the correct hypercohomology group (see Proposition~\ref{propcorrectcohomology}).

If one dualizes, that is, if one comes back to $\Q[\pitop(X(\C),a)]/I^{n+1}$
(see~\textsection\ref{sub3.1}), one finds a complex which is more difficult to
write down since it is defined only as an object of the derived category (see
Definition~\ref{defn3.5}). However, it turns out that the cohomology of this complex carries liftings
of the cycle class associated to $s \colon G_k \rightarrow \pi_1(X,x)$ (see Proposition~\ref{prop3.11}).
We discuss in more detail the additional information contained in the first of these liftings.

\medskip
{\it Acknowledgments.} We thank Alexander Beilinson for his interest, and Ph\`ung H\^o Hai and Marc Levine  for discussions based on earlier work related to the topic of this note. We are
indebted to Jakob Stix for a number of useful comments and for allowing us to include Remark~\ref{rmks37}~(ii).
It~is also a pleasure to acknowledge the influence of his period-index result~\cite{Stix2}
on the contents of~\textsection\ref{sectionalgebraicity}.

{\it Notation.} If~$M$ is an abelian group, $m$ is an integer and~$\ell$ is a prime number, we let $M \otimeshat \Zl=\varprojlim_{n \geq 1} M/\ell^nM$ and ${\rm T}_\ell(M)={\rm Hom}(\Ql/\Zl,M)$, and we denote by $M[m]$  the $m$\nobreakdash-torsion subgroup of~$M$.
Moreover we let $M \otimeshat \Ql = (M \otimeshat \Zl) \otimes_{\Zl}\Ql$ and ${\rm V}_\ell(M)={\rm T}_\ell(M) \otimes_{\Zl} \Ql$.

\section{Cycle classes of sections}
\label{section2}

Let~$k$ be a field and~$\kbar$ be a separable closure of~$k$.  We denote by~$X$ a geometrically connected separated variety over~$k$ endowed with a $\kbar$\nobreakdash-point $x \in X(\kbar)$,
and we let $\Xbar = X \otimes_k \kbar$.

Grothendieck~\cite{sga1} defines a \emph{universal cover of~$X$ at~$x$}
to be a directed inverse system of connected pointed \'etale covers of~$(X,x)$ such that the inverse limit
$\pitilde_x:\Xtilde_x\to X$ factors through every \'etale cover of~$X$.
(A~universal cover of~$X$ at~$x$ exists and is unique up to a unique isomorphism.)
The fundamental group $\pi_1(X,x)$ can be identified with the automorphism group of $\pitilde_x:\Xtilde_x\to X$.
Closed subgroups of $\pi_1(X,x)$ then correspond to ``sub-pro-\'etale covers'' of $\Xtilde_x \to X$, that is,
to factorisations $\Xtilde_x \rightarrow X' \rightarrow X$ of~$\pitilde_x$
where~$X'$ is a connected pro-\'etale cover of~$X$.

The morphism $\Xbar \rightarrow X$ is a connected pro-\'etale cover.  Moreover $\Xbar$ is canonically endowed with a $\kbar$\nobreakdash-point above~$x$.
Hence
$\pitilde_x:\Xtilde_x \to X$ factors canonically through~$\Xbar$, so that~$\Xtilde_x$ can be viewed as the universal cover of~$\Xbar$ at~$x$.
In general, there need not exist a pro-cover of~$X$ which, after extension of scalars from~$k$ to~$\kbar$,
becomes isomorphic to the universal cover of~$\Xbar$ at~$x$.  Such a pro-cover of~$X$ exists if and only if
the natural map $\pi_1(X,x) \rightarrow G_k$ admits a (continuous, as will always be implied) section.
More precisely, let $s \colon G_k \rightarrow \pi_1(X,x)$ be a section. Let us denote
by $\pi_s \colon X_s \rightarrow X$ the sub-pro-\'etale cover of $\pitilde_x \colon \Xtilde_x \to X$ which corresponds to the subgroup $s(G_k) \subseteq \pi_1(X,x)$.
The scheme $X_s \otimes_k \kbar$ is then canonically isomorphic to $\Xtilde_x$
over~$\Xbar$.  In particular~$X_s$ is a directed inverse limit of \'etale covers of~$X$ each of which is geometrically connected over~$k$.
If~$s$ is the section associated (up to conjugacy by~$\pi_1(\Xbar, x)$) to a rational point $a \in X(k)$, then $X_s(k)\neq\emptyset$; more precisely,
the set $X_s(k)$ then contains a canonical lifting $\atilde \in X_s(k)$ of~$a$.

Let~$\Lambda$ denote the ring~$\Z/N\Z$ for some integer $N \geq 1$ which is invertible in~$k$.
For $m \in \Z$, we denote by $\Lambda(m)$ the $m$\nobreakdash-th Tate twist of~$\Lambda$ (so $\Lambda(1)=\mmu_N$).

\begin{defn}\label{defn2.1}
We say that the variety~$X$ \emph{is a \kpi{}} if $\varinjlim H^i(Y, \Lambda)=0$
for all $i\ge 1$ and all $N \geq 1$ such that~$N$ is invertible in~$k$.  The direct limit
ranges over all factorisations $\Xtilde_x \rightarrow Y \rightarrow X$ of $\pitilde_x \colon \Xtilde_x \rightarrow X$ where~$Y$ is finite over~$X$ (for some fixed $x \in X(\kbar)$).
This property is purely geometric: it only depends on the scheme~$\Xbar$.
\end{defn}

An extended discussion of the notion of \kpi{} varieties may be found in \cite[Appendix~A]{Stix}.
The equivalence between Definition~\ref{defn2.1} and \cite[Definition~A.1.2]{Stix} is established in \cite[Proposition~A.3.1]{Stix}.

\begin{examples}
(i) Smooth proper curves of genus $g \geq 1$ as well as smooth affine curves are~\kpi's.

(ii) If~$X$ is a \kpi{} variety, any \'etale cover of~$X$ is also a \kpi{}.

(iii) According to Artin~\cite{artinsga}, on any smooth variety over a field of characteristic~$0$,
the~\kpi{} open subsets form a basis of the Zariski topology.
\end{examples}

Under the assumption that~$X$ is a \kpi{},
we shall now associate, to any section $s \colon G_k \rightarrow \pi_1(X,x)$, a ``cycle class'' in the \'etale cohomology
group with compact supports $H^{2d}_c(X,\Lambda(d))$, where $d=\dim(X)$.

\begin{defn}
(i) If~$\Vhat$ is a variety over~$k$ and $V \subseteq \Vhat$ is a dense open subset with complement~$D$, we shall denote
the \'etale cohomology group
$H^m(\Vhat,j_!\Lambda(n))$, where~$j$ stands for the inclusion $j \colon V \hookrightarrow \Vhat$,
by $H^m(\Vhat,D,\Lambda(n))$.  When~$\Vhat$ is a proper variety,
this group depends only on~$V$ and is usually denoted $H^m_c(V,\Lambda(n))$.

(ii) If $s \colon G_k \rightarrow \pi_1(X,x)$ is a section, we shall say that
an \'etale cover $\pi \colon Y \rightarrow X$ \emph{appears in} $\pi_s \colon X_s \rightarrow X$
if the morphism~$\pi_s$ factors through~$\pi$.  Given an \'etale cover $Y \rightarrow X$ appearing in~$\pi_s$, we shall say that an \'etale cover $Z \rightarrow Y$ \emph{appears in~$\pi_s$}
if the composition $Z \rightarrow X$ appears in~$\pi_s$.
A~property depending on an \'etale cover $Y \rightarrow X$ will be said to
\emph{hold if $Y \rightarrow X$ appears high enough in~$\pi_s$} if there exists an \'etale cover $Y_0 \rightarrow X$ appearing in~$\pi_s$ such that all
\'etale covers $Y \rightarrow X$ which appear in~$\pi_s$ and which factor through~$Y_0$ satisfy the given property.
Finally, given an \'etale cover $Y \rightarrow X$ appearing in~$\pi_s$, we shall say that a property depending on an \'etale cover $Z \rightarrow Y$
\emph{holds if $Z \rightarrow Y$ appears high enough in~$\pi_s$} if there exists an \'etale cover $Z_0 \rightarrow Y$ appearing in~$\pi_s$
such that all \'etale covers $Z \rightarrow Y$ which appear in~$\pi_s$ and which factor through~$Z_0$ satisfy the given property.
\end{defn}

\begin{prop}\label{prop2.2}
Let~$\Vhat$ be a variety over~$k$ and $V \subseteq \Vhat$ be a dense open subset, with complement $D=\Vhat \setminus V$.
Let~$X$ be a geometrically connected variety over~$k$.
Let $x \in X(\kbar)$.
Let $s \colon G_k \rightarrow \pi_1(X,x)$ be a section of the natural map $\pi_1(X,x) \rightarrow G_k$.
Assume~$X$ is a \kpi.
Then for any \'etale cover $\pi \colon Y \rightarrow X$ appearing high enough in~$\pi_s$, the following conditions hold: for any $m \geq 0$ and any $n \in \Z$,
the pullback map
$$
H^m(\Vhat, D, \Lambda(n)) \longrightarrow H^m(\Vhat\times Y, D\times Y, \Lambda(n))
$$
induced by the first projection $\Vhat \times Y \rightarrow \Vhat$ is injective and its image is equal to the image of the pullback map
$$
(1 \times \pi)^\star \colon
H^m(\Vhat\times X, D\times X, \Lambda(n))
\longrightarrow
H^m(\Vhat\times Y, D\times Y, \Lambda(n))\rlap{\text{.}}
$$
\end{prop}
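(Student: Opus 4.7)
The plan is to exploit the Leray spectral sequence for the first projection $p_1 \colon \Vhat \times Y \to \Vhat$ applied to $(j \times 1)_! \Lambda(n) = p_1^\star j_! \Lambda(n)$. Using the projection formula together with base change along the Cartesian square with fourth corner $\Spec k$, one computes $R^q p_{1,\star}((j\times 1)_! \Lambda(n)) \simeq j_!(\mathcal{F}_q^Y(n))$, where $\mathcal{F}_q^Y$ is the locally constant sheaf on $V$ associated, via $V \to \Spec k$, to the Galois module $H^q(\Ybar, \Lambda)$. This yields a spectral sequence
\[
E_2^{p,q}(Y) = H^p(\Vhat, j_!(\mathcal{F}_q^Y(n))) \Longrightarrow H^{p+q}(\Vhat \times Y, D \times Y, \Lambda(n))\text{,}
\]
and since every $Y$ appearing in $\pi_s$ is geometrically connected, $\mathcal{F}_0^Y = \Lambda$, so that $E_2^{m,0}(Y) = H^m(\Vhat, D, \Lambda(n))$ and the corresponding edge map is the pullback $p_1^{Y\star}$ under study.

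The key observation that unlocks the argument is the geometric cofinality of $\pi_s$: every open subgroup $H \subseteq \pi_1(\Xbar, x)$ contains a subgroup of the form $U \cap \pi_1(\Xbar, x)$ for some open $U \subseteq \pi_1(X,x)$ with $s(G_k) \subseteq U$. Indeed, intersecting the $s(G_k)$-conjugates of (the normal core of) $H$ produces an open $s(G_k)$-stable subgroup $H' \subseteq H$; then $U = H' \cdot s(G_k)$ is open in $\pi_1(X,x)$, contains $s(G_k)$, and satisfies $U \cap \pi_1(\Xbar, x) = H'$. Combined with the \kpi{} hypothesis (which is purely geometric), this forces $\varinjlim_{Y \in \pi_s} H^q(\Ybar, \Lambda) = 0$ for every $q \geq 1$.

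For injectivity, I pass to the direct limit over all $Y$ in $\pi_s$. Since filtered colimits are exact and commute with \'etale cohomology of constructible sheaves on the fixed scheme $\Vhat$, the limit of the above spectral sequences collapses onto the $q = 0$ row, so that $\varinjlim_{Y \in \pi_s} H^m(\Vhat \times Y, D \times Y, \Lambda(n)) \cong H^m(\Vhat, D, \Lambda(n))$ with limiting map the identity. The kernels $\ker(p_1^{Y\star})$ form an increasing chain of subgroups of $H^m(\Vhat, D, \Lambda(n))$ whose union is the kernel of this identity, hence zero, so each $\ker(p_1^{Y\star})$ vanishes. For the image equality, functoriality shows that $(1 \times \pi)^\star$ induces a morphism of spectral sequences whose $E_2^{p,q}$ component is induced by $\pi^\star \colon H^q(\Xbar, \Lambda) \to H^q(\Ybar, \Lambda)$. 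Because $H^q(\Xbar, \Lambda)$ is finite and vanishes for $q$ large, the cofinality observation yields a $Y_0$ such that, for every $Y \in \pi_s$ factoring through $Y_0$, the map $\pi^\star$ annihilates $H^q(\Xbar, \Lambda)$ for all $q \geq 1$. The induced map of $E_2$-pages is then concentrated on the $q = 0$ row, forcing the image of $(1 \times \pi)^\star$ into the lowest filtration piece $F^m H^m = E_\infty^{m,0}(Y)$, which coincides with the image of the now-injective edge map $p_1^{Y\star}$; the opposite inclusion is immediate from $p_1^{Y\star} = (1 \times \pi)^\star \circ p_1^{X\star}$. The main technical obstacle lies in the base-change identification of $R^q p_{1,\star}$ underlying the spectral sequence and in the commutation of filtered colimits with \'etale cohomology, while the conceptual heart of the argument is the cofinality that translates the abstract \kpi{} hypothesis into a concrete vanishing statement along the tower $\pi_s$.
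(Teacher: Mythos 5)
Your setup coincides with the paper's: the Leray spectral sequence for the first projection, the identification of $E_2^{p,q}(Y)$ with $H^p(\Vhat, j_!\mathcal{F}_q^Y(n))$, and the descent of geometric covers killing cohomology to covers appearing in~$\pi_s$ (the paper does this by taking the Galois closure over~$Y$ of a model of $\Zbar$ over a finite extension of~$k$, which avoids having to worry about whether the intersection of the $s(G_k)$-conjugates of your normal core is still open --- a point your sketch glosses over when $\pi_1(\Xbar,x)$ is not known to be topologically finitely generated). Your injectivity argument via the colimit of spectral sequences is a clean variant of the paper's (which instead shows $E_p^{m,0}(Y)\to E_{p+1}^{m,0}(Y)$ is injective for each $p$ once $Y$ is high enough), and it even yields injectivity for every $Y$ in the tower.

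The image-equality step, however, has a genuine gap. From the vanishing of the morphism of $E_2$-pages in all rows $q\ge 1$ you conclude that the image of $(1\times\pi)^\star$ lands in $F^mE^m(Y)$. What actually follows is only that it lands in $F^1E^m(Y)$: a morphism of filtered groups that vanishes on $\mathrm{gr}^0$ sends $F^0$ into $F^1$, but to push further into $F^2$ you would need to apply the vanishing of $\mathrm{gr}^1$ of the map to an element of $F^1E^m(X)$, whereas your class need not lie in $F^1E^m(X)$. A map of filtered groups inducing zero on every graded piece except the bottom one need not have image in the bottom step --- multiplication by $2$ on $\Z/4$ filtered by $F^0\supseteq 2\Z/4\supseteq 0$ kills both graded pieces yet has image $F^1\ne F^2$. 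This is exactly why the paper iterates: it chooses a tower $X=Y_0\leftarrow Y_1\leftarrow\cdots\leftarrow Y_m$ of covers appearing in $\pi_s$, each stage gaining one filtration level via $F^iE^m(Y_j)\to F^{i+1}E^m(Y_{j+1})$, so that only after $m$ steps does the image reach $F^mE^m(Y_m)$. Alternatively, your own colimit identification repairs the step without towers of spectral sequences: since $\varinjlim_Y E^m(Y)\cong H^m(\Vhat,D,\Lambda(n))$ compatibly with the edge maps, each class $\xi\in E^m(X)$ differs from a class pulled back from $H^m(\Vhat,D,\Lambda(n))$ by something that dies in $E^m(Y_\xi)$ for some $Y_\xi$, and the finiteness of $E^m(X)$ lets you choose a single $Y_0$ working for all $\xi$. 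Either repair should be made explicit.
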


\begin{proof}
Let~$j$ denote the inclusion $j \colon V \times Y \hookrightarrow \Vhat \times Y$.
Consider the Leray spectral sequence for the first projection $p \colon \Vhat \times Y \rightarrow \Vhat$ and the \'etale sheaf
$j_! \Lambda(n)$:
\begin{equation}
\label{lasuitespectraleleray}
E^{a,b}_2(Y):=H^a(\Vhat, R^bp_\star (j_! \Lambda(n))) \Longrightarrow
E^{a+b}(Y):=H^{a+b}(\Vhat\times Y,j_! \Lambda(n))
\end{equation}
We are interested in determining the kernel and the image of the natural maps $E^{m,0}_2(Y) \rightarrow E^m(Y)$
under the assumption that~$Y$ appears high enough in~$\pi_s$,
since
 $E_2^{m,0}(Y)=H^m(\Vhat,D,\Lambda(n))$
and $E^m(Y)=H^m(\Vhat \times Y, D \times Y, \Lambda(n))$.
 Let us denote by $(F^iE^m(Y))_{i \geq 0}$ the descending filtration
on  $E^m(Y)$ determined by~(\ref{lasuitespectraleleray}), so that
 $F^iE^m(Y)/F^{i+1}E^m(Y)=E^{i,m-i}_\infty(Y)$.

\begin{lem}
\label{lemmeexisteZ}
Let $Y \rightarrow X$ be an \'etale cover appearing in~$\pi_s$.
If $Z \rightarrow Y$ is an \'etale cover which appears high enough in~$\pi_s$,
then for every $a \geq 0$ and every $b \geq 1$, the natural map $E^{a,b}_2(Y) \rightarrow E^{a,b}_2(Z)$ is zero.
\end{lem}

\begin{proof}
The \'etale sheaf $R^bp_\star(j_!\Lambda(n))$ on~$\Vhat$ coincides with the extension by zero from~$V$ to~$\Vhat$ of the pullback, by the structure morphism $V \rightarrow \Spec(k)$,
of the \'etale sheaf defined by the $G_k$\nobreakdash-module $H^b(\Ybar,\Lambda(n)) = H^b(\Ybar,\Lambda) \otimes_{\Lambda} \Lambda(n)$,
where $\Ybar=Y \otimes_k \kbar$.  As a consequence we need only check that there exists an \'etale cover $Z \rightarrow Y$ appearing in~$\pi_s$
 such that for every $b \geq 1$, the pullback map $H^b(\Ybar,\Lambda) \rightarrow H^b(\Zbar,\Lambda)$ is zero.
Since~$X$ (and therefore~$Y$) is a \kpi{}, there exists an \'etale cover $\Zbar \rightarrow \Ybar$ such that for every $b \geq 1$,
the pullback map $H^b(\Ybar,\Lambda) \rightarrow H^b(\Zbar,\Lambda)$ is zero.  After replacing~$\Zbar$ with an \'etale cover of~$\Zbar$, we may assume that the composition
$\Zbar \rightarrow \Ybar \rightarrow Y$ is Galois.  This implies that there exists an \'etale cover $Z \rightarrow Y$ which appears in~$\pi_s$ and whose base change to~$\Ybar$
is $\Zbar \rightarrow \Ybar$.
\end{proof}

It follows from Lemma~\ref{lemmeexisteZ} that for any $Y \rightarrow X$ appearing in~$\pi_s$ and any $i<m$, if~$Z \rightarrow Y$ appears high enough in~$\pi_s$
then the image of the natural map $F^iE^m(Y) \rightarrow F^iE^m(Z)$ is contained in $F^{i+1}E^m(Z)$.  By iterating, we see that
if $Y \rightarrow X$ appears high enough in~$\pi_s$, then the image of the natural map $E^m(X) \rightarrow E^m(Y)$ is contained in $F^mE^m(Y)$.
Now $F^mE^m(Y)$ is the image of the map $E^{m,0}_2(Y) \rightarrow E^m(Y)$.  Hence the second assertion of the proposition is established.

To prove that the map $E^{m,0}_2(Y) \rightarrow E^m(Y)$ is injective if~$Y$ appears high enough in~$\pi_s$, it suffices to check
(by iteration again)
that for every $p \geq 2$, the natural map $E^{m,0}_p(Y) \rightarrow E^{m,0}_{p+1}(Y)$ is injective if~$Y$ appears high enough in~$\pi_s$.  The kernel
of the latter map is a quotient of $E^{m-p,p-1}_p(Y)$.  Hence the result again follows from Lemma~\ref{lemmeexisteZ} (since $p-1 \geq 1$).
\end{proof}

\begin{constthm}\label{thm2.5}
Let $X$ be a smooth, geometrically connected and separated variety of dimension~$d$ over~$k$.  We assume that~$X$ is a~\kpi.
Let $x \in X(\kbar)$ and let~$s$ be a section of the natural map $\pi_1(X,x) \rightarrow G_k$.
Let $N\geq 1$ be an integer invertible in~$k$.  Put $\Lambda=\Z/N\Z$.
To~$s$ we associate a class $\alpha(\Lambda) \in H^{2d}_c(X,\Lambda(d))$.  More generally, to~$s$ and to any \'etale cover $Y \rightarrow X$ appearing in~$\pi_s$, we associate
a class $\alpha(\Lambda,Y) \in H^{2d}_c(Y,\Lambda(d))$.

All these classes are compatible as~$Y$ and~$\Lambda$ vary, in the sense that for any prime number~$\ell$ which is invertible in~$k$, they define an element~$\alphatilde$ of the inverse limit
$$
\varprojlim_{m \geq 1} \varprojlim_{Y \rightarrow X}  H^{2d}_c(Y,\Z/\ell^m\Z(d))
$$
where the second limit ranges over all factorisations $X_s \rightarrow Y \rightarrow X$ of~$\pi_s$ with~$Y$ finite over~$X$, and where the transition morphism associated
to $Z \rightarrow Y$
is the trace map $H^{2d}_c(Z,\Z/\ell^m\Z(d)) \rightarrow H^{2d}_c(Y,\Z/\ell^m\Z(d))$.

These classes have degree~$1$, in the sense that for any \'etale cover $Y \rightarrow X$ appearing in~$\pi_s$, the image of $\alpha(\Lambda,Y)$ in $H^{2d}_c(\Ybar,\Lambda(d))=\Lambda$ is equal to $1 \in \Lambda$.

Moreover, if~$s$ is the section associated (up to conjugacy) to a rational point $a \in X(k)$, then~$\alphatilde$ is the cycle class of the rational point $\atilde \in X_s(k)$.
\end{constthm}
\begin{proof}
Let $X \subseteq \Xhat$ be a compactification of~$X$ and let $D=\Xhat \setminus X$.
Since~$X$ is smooth and separated over~$k$, the diagonal embedding $X \hookrightarrow \Xhat \times X$ defines
a class $[\Delta] \in H^{2d}(\Xhat \times X, D \times X, \Lambda(d))$.
According to Proposition~\ref{prop2.2}, if $\pi \colon Z \rightarrow X$ is an \'etale cover
which appears high enough in~$\pi_s$, the inverse image $(1\times \pi)^\star[\Delta]$ of~$[\Delta]$ in $H^{2d}(\Xhat \times Z, D \times Z, \Lambda(d))$
comes, by pullback, from a unique element of $H^{2d}(\Xhat,D,\Lambda(d))=H^{2d}_c(X,\Lambda(d))$.  It is this element which we denote $\alpha(\Lambda)$.
It~does not depend on the choice of~$Z$.  It maps to~$1$ in $H^{2d}_c(\Xbar,\Lambda(d))=\Lambda$ because for any $z \in Z(\kbar)$,
the class
$(1 \times \pi)^\star[\Delta]$ obviously
maps to~$1$ by $H^{2d}(\Xhat\times Z,D\times Z, \Lambda(d)) \rightarrow H^{2d}(\Xhat \times \{z\}, D \times \{z\},\Lambda(d))=H^{2d}_c(\Xbar,\Lambda(d))=\Lambda$.

For any \'etale cover $Y \rightarrow X$ appearing in~$\pi_s$, the section~$s$ induces a section of the natural map $\pi_1(Y,y)\rightarrow G_k$ for some $y \in Y(\kbar)$ above~$x$.
By applying the previous construction to~$Y$ and to this new section, we therefore obtain a degree~$1$ class $\alpha(\Lambda,Y) \in H^{2d}_c(Y,\Lambda(d))$.

That these classes are compatible if~$\Lambda$ and~$Y$ are allowed to vary follows from the fact that
the construction of~$\alpha(\Lambda)$ can be carried out with a given \'etale cover $\pi \colon Z \rightarrow X$ as soon as it appears high enough in~$\pi_s$.

Suppose now that~$s$ is associated to a rational point $a \in X(k)$.  To prove that~$\alphatilde$ is the cycle class of~$\atilde$, it suffices
to prove that $\alpha(\Lambda)$ is the cycle class of~$a$ (one can then apply the argument to every \'etale cover $Y \rightarrow X$ which appears in~$\pi_s$).
Let $\pi \colon Z \rightarrow X$ be as in the construction of~$\alpha(\Lambda)$.
Let $a_Z \in Z(k)$ denote the image of $\atilde \in X_s(k)$.  Restriction
to $\Xhat \times \{a_Z\} \subseteq \Xhat \times Z$ defines a map
$H^{2d}(\Xhat \times Z, D \times Z, \Lambda(d)) \rightarrow H^{2d}(\Xhat,D,\Lambda(d))$ which is a retraction of the natural map in the other direction.
Therefore $\alpha(\Lambda)$ is the image of $(1 \times \pi)^\star[\Delta]$ by this map.  Now $(1 \times \pi)^\star[\Delta]$ is the cycle class of the graph of~$\pi$;
hence its restriction to $\Xhat \times \{a_Z\}$ is the cycle class of $\pi(a_Z)=a$.
\end{proof}

\begin{rmk}
\label{rmk2.7}
Let us not assume that~$X$ is a $K(\pi,1)$.
Let~$s$ be a section of the natural map $G_{k(X)}\rightarrow G_k$,
where~$G_{k(X)}$ denotes the absolute Galois group of~$k(X)$.
Then~$s$ determines (up to conjugacy) a section of $\pi_1(U,u) \rightarrow G_k$ for any dense open $U \subseteq X$ and any $u \in U(\kbar)$.
In particular, by choosing~$U$ to be small enough, one may assume that~$U$ is a $K(\pi,1)$ and apply Theorem~\ref{thm2.5} to produce a well-defined class $\alpha(\Lambda,U) \in H^{2d}_c(U,\Lambda(d))$.
The image of~$\alpha(\Lambda,U)$ in $H^{2d}_c(X,\Lambda(d))$ does not depend on the choice of~$U$.  We shall again denote it by~$\alpha(\Lambda)$.
\end{rmk}

\section{Algebraicity of cycle classes over \texorpdfstring{$p$-adic}{p-adic} fields}
\label{sectionalgebraicity}

Building upon earlier results of Pop, Koenigsmann~\cite{Ko} showed that if~$X$ is a smooth and geometrically connected proper curve over a $p$\nobreakdash-adic field~$k$,
then every section~$s$ of the natural map $G_{k(X)} \rightarrow G_k$
determines a unique rational point $a\in X(k)$ (in the sense that the section of $\pi_1(X,x) \rightarrow G_k$ induced by~$s$ is associated to~$a$).
Koenigsmann's proof is model-theoretic.
We observe here that Theorem~\ref{thm2.5} gives a geometric understanding of the ``abelian'' part of the rational point~$a$ (\emph{i.e.}, of~$a$ considered as a divisor of degree~$1$ on~$X$):

\begin{prop}\label{propkoenig}
Let~$X$ be a smooth proper geometrically connected curve over a $p$\nobreakdash-adic field~$k$.
Let~$s$ be a section of the natural map $G_{k(X)} \rightarrow G_k$. Let $\alpha \in H^2(X,\Zhat(1))$ denote the inverse limit of the classes $\alpha(\Z/n\Z) \in H^2(X,\mmu_n)$
constructed in Theorem~\ref{thm2.5} and Remark~\ref{rmk2.7}.
Then~$\alpha$ is the cycle class of a degree~$1$ divisor on~$X$ (uniquely determined up to linear equivalence).
\end{prop}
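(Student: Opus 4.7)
The plan is to combine the Kummer sequence with the excision sequences provided by Theorem~\ref{thm2.5} and Remark~\ref{rmk2.7}, and then to invoke Lichtenbaum's duality theorem for curves over $p$\nobreakdash-adic fields. Kummer theory gives, for each $N$, a short exact sequence
$$0 \to \Pic(X)/N \to H^2(X,\mmu_N) \to \Br(X)[N] \to 0\rlap{,}$$
all of whose terms are finite (one uses Tsen and Tate local duality, combined with the Hochschild--Serre computation $\Br(X)/\Br(k)\simeq H^1(k,J)$ where $J=\Pic^0(X)$, to bound $\Br(X)[N]$), so Mittag--Leffler applies and passing to the inverse limit yields
$$0 \to \Pic(X)\otimeshat\Zhat \to H^2(X,\Zhat(1)) \to T\Br(X) \to 0\rlap{.}$$
The proposition will follow once I show that the image $\bar\alpha$ of $\alpha$ in $T\Br(X)$ vanishes and that the resulting class in $\Pic(X)\otimeshat\Zhat$ lifts to a genuine degree-$1$ divisor class.

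For each closed point $v\in X$, the open $U_v:=X\setminus\{v\}$ is smooth affine, hence a~\kpi; by Remark~\ref{rmk2.7}, $\alpha$ lifts to $\alpha_{U_v}\in H^2_c(U_v,\Zhat(1))$, and the excision long exact sequence
$$H^1(\kappa(v),\Zhat(1)) \to H^2_c(U_v,\Zhat(1)) \to H^2(X,\Zhat(1)) \xrightarrow{\mathrm{res}_v} H^2(\kappa(v),\Zhat(1))$$
forces $\mathrm{res}_v(\alpha)=0$. Because $\Pic(\kappa(v))=0$, the functoriality of Kummer shows that $\mathrm{res}_v$ factors through $T\Br(X)\to T\Br(\kappa(v))$, so $\bar\alpha$ restricts to zero at every closed point of~$X$. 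Now I invoke Lichtenbaum's pairing
$$\Pic(X)\otimes\Br(X)\to\Br(k)=\Q/\Z\rlap{,}\qquad \langle [v],\beta\rangle = \mathrm{inv}_k\bigl(\mathrm{cor}_{\kappa(v)/k}\beta|_v\bigr)\rlap{,}$$
which identifies $\Br(X)$ with the continuous Pontryagin dual of $\Pic(X)$ endowed with its natural $p$\nobreakdash-adic topology. For each $N$, the vanishing $\bar\alpha_N|_v=0$ for all $v$ implies by bilinearity that $\bar\alpha_N\in\Br(X)[N]$ pairs trivially with every element of $\Pic(X)$; non\nobreakdash-degeneracy gives $\bar\alpha_N=0$, whence $\bar\alpha=0$.

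It remains to descend from $\Pic(X)\otimeshat\Zhat$ to $\Pic(X)$. The image of the degree map $\Pic(X)\otimeshat\Zhat\to\Zhat$ equals $\mathrm{ind}(X)\cdot\Zhat$; since $\alpha$ has degree $1$ by Theorem~\ref{thm2.5}, this forces $\mathrm{ind}(X)=1$. Fixing any degree-$1$ divisor $D_0$ on~$X$, the difference $\alpha-[D_0]$ lies in $J(k)\otimeshat\Zhat$. Because $J(k)$ is already profinite ($p$\nobreakdash-adic Lie group modulo finite torsion), the natural map $J(k)\to J(k)\otimeshat\Zhat$ is an isomorphism, so $\alpha-[D_0]\in J(k)$; hence $\alpha$ is the cycle class of a degree-$1$ divisor, uniquely determined up to linear equivalence by the injectivity of $\Pic(X)\hookrightarrow\Pic(X)\otimeshat\Zhat$. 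The main obstacle is the non\nobreakdash-degeneracy half of Lichtenbaum's duality theorem, the essential arithmetic input specific to the $p$\nobreakdash-adic setting.
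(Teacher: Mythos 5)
Your proof is correct and takes essentially the same route as the paper's: both reduce via the Kummer sequence to killing the image of $\alpha$ in $\Br(X)$, use the compactly supported liftings from Remark~\ref{rmk2.7} to show that this image is orthogonal to $\Pic(X)$ under the Lichtenbaum pairing (you do this pointwise via $U_v = X\setminus\{v\}$, the paper equivalently via the image of $H^2_c(U,\Gm)\to\Br(X)$ for all dense open $U$), invoke Lichtenbaum--Tate duality to conclude that this image vanishes, and finally descend from $\varprojlim_n \Pic(X)/n\Pic(X)$ to $\Pic(X)$ using the degree-$1$ condition together with Mattuck's theorem (Milne~I.3.3). The only cosmetic slip is that in the last step the relevant group is $\Pic^0(X)$ (a finite-index subgroup of $J(k)$) rather than $J(k)$ itself, but the same completeness argument applies to it.
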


\begin{proof}
Multiplication by~$n$ on~$\Gm$ induces an exact sequence
$$
\xymatrix{
0 \ar[r] & \Pic(X)/n\Pic(X) \ar[r] & H^2(X,\mmu_n) \ar[r] & \Br(X)
}
$$
where $\Br(X)=H^2(X,\Gm)$.
According to Remark~\ref{rmk2.7}, the class $\alpha(\Z/n\Z)$ belongs to the image of the map $H^2_c(U,\mmu_n) \rightarrow H^2(X,\mmu_n)$
for every dense open $U \subseteq X$.  Hence its image in $\Br(X)$ belongs to the image of $H^2_c(U,\Gm) \rightarrow H^2(X,\Gm)$ for every
dense open $U \subseteq X$. In other words the image of $\alpha(\Z/n\Z)$ in $\Br(X)$ belongs to the right kernel of the natural pairing
$\Pic(X) \times \Br(X) \rightarrow \Br(k)$.
By Lichtenbaum--Tate duality~\cite{Li} this right kernel is zero; hence finally $\alpha(\Z/n\Z) \in \Pic(X)/n\Pic(X)$.
On the other hand, the image of~$\alpha$
in $H^2(\Xbar, \Zhat(1))=\Zhat$ is equal to~$1$.  Therefore $\alpha$ belongs to the inverse image of $1 \in \Zhat$ by the degree map
$\varprojlim_{n \geq 1} \left(\Pic(X)/n\Pic(X)\right) \rightarrow \Zhat$,
which means, since~$k$ is $p$\nobreakdash-adic, that $\alpha$ is the image of a unique element of~$\Pic(X)$
(see~\cite[I.3.3]{Mil}).
\end{proof}

Let now~$X$ be a smooth proper geometrically connected curve of genus $g \geq 2$ over a $p$\nobreakdash-adic field~$k$.
It is an open question whether any section $s \colon G_k \rightarrow \pi_1(X,x)$
coincides, up to conjugacy, with
the section associated to a rational point of~$X$
($p$\nobreakdash-adic analogue of Grothendieck's section conjecture).
For such a statement to hold, it is necessary that for any section~$s$, the cycle class constructed
in Theorem~\ref{thm2.5} be algebraic (\emph{i.e.}, be the cycle class of an algebraic cycle).
Proposition~\ref{propkoenig} (and more generally Koenigsmann's theorem)
asserts that such is the case if~$s$
admits a lifting to $G_{k(X)}$.
In Corollary~\ref{coralphaalg} below we prove that such is also the case in general
provided one restricts attention to cycle classes in $\ell$\nobreakdash-adic \'etale cohomology with $\ell \neq p$.
From this it follows (Corollary~\ref{corstix})
that the existence of a section $s \colon G_k \rightarrow \pi_1(X,x)$
implies the existence, on~$X$, of a divisor whose degree is a power of~$p$
(in other words, the index~$I$ of~$X$ is a power of~$p$).

Corollary~\ref{corstix} was already
known, see Stix~\cite{Stix2} (moreover, Corollary~\ref{coralphaalg} may in fact be deduced from Corollary~\ref{corstix}).
Quite generally, Lichtenbaum~\cite{Li} proved that the period~$P$ and the index~$I$
of a curve~$X$ as above over a $p$\nobreakdash-adic field~$k$
satisfy the divisibility relations $P \divise I \divise 2P$ and that moreover, if $I=2P$,
then $(g-1)/P$ is an odd integer.
If one believes that every section $s \colon G_k \rightarrow \pi_1(X,x)$ comes from a rational point, then one
should have $I=P=1$ as soon as a section exists.
Stix~\cite[Theorem~2]{Stix2} uses Lichtenbaum's result to show, assuming the existence of a section, that~$I=P$ if $p>2$
and that in any case~$I$ and~$P$ are powers of~$p$.
The proof we give below for Corollary~\ref{coralphaalg} and Corollary~\ref{corstix},
however, does not appeal to Lichtenbaum's result; it is based on a study of the cycle
classes associated to sections of $\pi_1(X,x)\rightarrow G_k$.

We henceforth denote by~$X$ a smooth proper geometrically connected \kpi{} variety of dimension~$d$ over a
field~$k$.
For any integers~$m$, $n$ and~$N$, we let $(F^iH^m(X,\Lambda(n)))_{i \geq 0}$, where  $\Lambda=\Z/N\Z$, denote the descending filtration
on $H^m(X,\Lambda(n))$ determined by the Leray spectral sequence for the structure
morphism $\epsilon \colon X \rightarrow \Spec(k)$ and the \'etale sheaf $\Lambda(n)$.
Let~$\ell$ be a prime number invertible in~$k$.
We set
$H^m(X, \Zl(n))=\varprojlim_{q \geq 1} H^m(X, \Z/\ell^q\Z(n))$,
$F^iH^m(X,\Zl(n))=\varprojlim_{q\geq 1} F^iH^m(X,\Z/\ell^q\Z(n))$
and we shall write~$\Ql$ coefficients to denote these $\Zl$\nobreakdash-modules tensored with~$\Ql$.
Any section $s \colon G_k \rightarrow \pi_1(X,x)$ induces
a retraction of $\epsilon^\star \colon H^{2d}(k,\Zl(d)) \rightarrow H^{2d}(X,\Zl(d))$ according
to Proposition~\ref{prop2.2} applied to $V=\Vhat=\Spec(k)$.  When a section~$s$ is given,
we denote this retraction by $r \colon H^{2d}(X,\Zl(d)) \rightarrow H^{2d}(k,\Zl(d))$
and we let $\alpha \in H^{2d}(X,\Zl(d))$ denote the cycle class of~$s$ (given by Theorem~\ref{thm2.5}).
We keep the notation~$\pi_s$ from~\textsection\ref{section2}.

\begin{prop}\label{prop3.2}
Let $s: G_k\to \pi_1(X)$ be a section. Assume
\begin{itemize}
 \item[(i)] $F^1H^{2d}(Y, \Ql(d))=F^{2d}H^{2d}(Y, \Ql(d))$ for all $Y\to X$ appearing in~$\pi_s$;
\item[(ii)] $H^{2d}(k, \Zl(d))$ is a finitely generated free $\Z_\ell$\nobreakdash-module.
\end{itemize}
Then $r(\alpha)=0$.
\end{prop}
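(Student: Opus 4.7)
By assumption~(ii), $H^{2d}(k,\Zl(d))$ is a torsion-free, hence $\ell$-adically separated, $\Zl$-module; so it suffices to prove that $r(\alpha)$ is divisible by $\ell^q$ for every $q\ge 1$.

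The crux is the identity
\[
r(\alpha)=m\cdot r_Z(\alpha_Z)\quad\text{in }H^{2d}(k,\Zl(d))
\]
for every finite \'etale cover $\pi\colon Z\to X$ of degree $m$ appearing in $\pi_s$, where $\alpha_Z\in H^{2d}(Z,\Zl(d))$ and $r_Z$ are the cycle class and retraction attached to the induced section of $\pi_1(Z)\to G_k$. To establish this I would first combine~(i) with $H^{2d}(\Ybar,\Ql(d))=\Ql$ (trivial $G_k$-action, by geometric connectedness of the smooth proper $Y$) to identify the Leray spectral sequence of $\epsilon_Y\colon Y\to\Spec(k)$ with the short exact sequence
\[
0\longrightarrow H^{2d}(k,\Ql(d))\xrightarrow{\epsilon_Y^\star}H^{2d}(Y,\Ql(d))\xrightarrow{\deg}\Ql\longrightarrow 0
\]
for every $Y\to X$ appearing in $\pi_s$ (including $Y=X$); injectivity of $\epsilon_Y^\star$ comes from Proposition~\ref{prop2.2} applied to a cover $W$ of $Y$ high enough in $\pi_s$, noting that $\epsilon_W^\star=\pi_{W/Y}^\star\epsilon_Y^\star$. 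The retraction $r_Y$ splits this sequence, so there exists a unique degree-$1$ class $e_Y\in\ker r_Y$ and the decomposition $\alpha_Y=e_Y+\epsilon_Y^\star r_Y(\alpha_Y)$ holds.

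The key lemma is $\text{Tr}_{Z/Y}(e_Z)=e_Y$. The class $\pi^\star e_Y-m\cdot e_Z$ has degree zero, and its image under $r_Z$ is $r_Y(e_Y)-m\cdot r_Z(e_Z)=0$ (using the compatibility $r_Z\circ\pi^\star=r_Y$, immediate from the $Y$-independent characterisation of the retractions via common deep covers); hence by the splitting, it vanishes. Applying $\text{Tr}_{Z/Y}$ and using the projection formula $\text{Tr}_{Z/Y}\circ\pi^\star=m\cdot\mathrm{id}$ gives $m\cdot e_Y=m\cdot\text{Tr}_{Z/Y}(e_Z)$, whence the lemma. Then applying $\text{Tr}_{Z/Y}$ to $\alpha_Z=e_Z+\epsilon_Z^\star r_Z(\alpha_Z)$ --- and using the trace compatibility $\text{Tr}_{Z/Y}(\alpha_Z)=\alpha_Y$ from Theorem~\ref{thm2.5} together with $\text{Tr}_{Z/Y}\circ\epsilon_Z^\star=m\cdot\epsilon_Y^\star$ --- produces $\alpha_Y=e_Y+m\cdot\epsilon_Y^\star r_Z(\alpha_Z)$, which combined with $\alpha_Y=e_Y+\epsilon_Y^\star r_Y(\alpha_Y)$ and the injectivity of $\epsilon_Y^\star$ yields $r_Y(\alpha_Y)=m\cdot r_Z(\alpha_Z)$. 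Specialising $Y=X$ gives the desired identity; it holds integrally since both sides lie in $H^{2d}(k,\Zl(d))$ inside the torsion-free module $H^{2d}(k,\Ql(d))$.

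To finish, the characteristic open subgroups $G_q=\ker\bigl(\pi_1(\Xbar)\to(\pi_1(\Xbar)^{\mathrm{ab}}\otimes\Zl)/\ell^q\bigr)$ are $s(G_k)$-stable (being characteristic in $\pi_1(\Xbar)$), hence correspond to covers $Z_q\to X$ appearing in $\pi_s$; provided $H^1(\Xbar,\Ql)\ne 0$ their degrees have $\ell$-adic valuation tending to infinity, and the degenerate case $H^1(\Xbar,\Ql)=0$ is handled separately. Applying the key identity to each~$Z_q$ shows that $r(\alpha)$ lies in $\bigcap_{q}\ell^qH^{2d}(k,\Zl(d))=0$.

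The main obstacle is the trace identity $\text{Tr}_{Z/Y}(e_Z)=e_Y$, which hinges on the clean splitting afforded by~(i) and on the pullback-compatibility of the retractions; a secondary issue is securing a cofinal family of covers in $\pi_s$ whose degrees have unbounded $\ell$-adic valuation.
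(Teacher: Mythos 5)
Your proposal is correct and follows essentially the same route as the paper: the heart in both cases is the identity $r(\alpha)=\deg(\pi)\cdot r_Z(\alpha_Z)$ for covers $Z\to X$ appearing in $\pi_s$ (you derive it via the splitting $\alpha_Y=e_Y+\epsilon_Y^\star r_Y(\alpha_Y)$ and the trace identity $\mathrm{Tr}_{Z/Y}(e_Z)=e_Y$; the paper shows instead that $\deg(\pi)\alpha_Z-\pi^\star\alpha$ is killed by an integer --- the same computation, using (i) to control degree-zero classes, the trace compatibility of the $\alpha_Y$, the projection formula and the compatibility $r_Z\circ\pi^\star=r_Y$, packaged differently), followed by infinite $\ell$\nobreakdash-divisibility of $r(\alpha)$ and hypothesis~(ii). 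Your explicit production of covers in $\pi_s$ whose degrees have unbounded $\ell$\nobreakdash-adic valuation, via characteristic subgroups of $\pi_1(\Xbar)$, supplies a justification for a step the paper merely asserts.
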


\begin{proof}
Let $\pi \colon Y \rightarrow X$ be an \'etale cover appearing in~$\pi_s$, and let~$\delta$ denote its degree.
Let $\alpha_Y \in H^{2d}(Y,\Zl(d))$ be the cycle class of~$s$ on~$Y$ (see Theorem~\ref{thm2.5}).
The retraction~$r$ factors as $r=r_Y \circ \pi^\star$
where $r_Y \colon H^{2d}(Y,\Zl(d)) \rightarrow H^{2d}(k,\Zl(d))$
is the retraction of $(\epsilon \circ \pi)^\star \colon H^{2d}(k,\Zl(d)) \rightarrow H^{2d}(Y,\Zl(d))$
obtained by applying Proposition~\ref{prop2.2} to~$Y$ instead of~$X$.

Let $\beta = \delta\alpha_Y-\pi^\star\alpha \in H^{2d}(Y,\Zl(d))$.  Since the image of~$\alpha$ (resp.~$\alpha_Y$)
in $H^{2d}(\Xbar,\Zl(d))=\Zl$ (resp.~in $H^{2d}(\Ybar,\Zl(d))=\Zl$) is equal
to~$1$, one has $\beta \in F^1H^{2d}(Y,\Zl(d))$.  Hence, by~(i), the existence of an integer $N>0$ such that
$N\beta \in F^{2d}H^{2d}(Y,\Zl(d))$.  Now $F^{2d}H^{2d}(Y,\Zl(d))=\pi^\star \epsilon^\star H^{2d}(k,\Zl(d))$,
so that there exists $c \in H^{2d}(k,\Zl(d))$ such that $N\beta=\pi^\star \epsilon^\star c$.
Let us apply~$\pi_\star$ and then~$r$ to this equality.  One has $\pi_\star\beta=0$ since $\pi_\star \alpha_Y=\alpha$;
hence we find $\delta c=0$.  By~(ii) this implies $c=0$, so that $N\beta=0$.  It follows
that $N r_Y(\beta)=0$ and then, by~(i), that $r_Y(\beta)=0$.
In view of the equality $r(\alpha)=r_Y(\pi^\star(\alpha))$, the vanishing of $r_Y(\beta)$ means
that $r(\alpha)=\delta r_Y(\alpha_Y)$.  In particular~$r(\alpha)$ is divisible by~$\delta$.

The integer~$\delta$ can be chosen
at the beginning of the argument
 to be divisible by arbitrarily large powers of~$\ell$.
Therefore $r(\alpha)$ is infinitely divisible in $H^{2d}(k,\Zl(d))$, which, by~(ii), finally implies $r(\alpha)=0$.
\end{proof}

\begin{cor} \label{cor3.3}
Let~$X$ be a smooth proper geometrically connected curve of genus $g\ge 1$
over a $p$\nobreakdash-adic field~$k$. Let $s \colon G_k\rightarrow
\pi_1(X,x)$ be a section, and~$\ell$ be a prime number different from~$p$.
Then $r(\alpha)=0$.
\end{cor}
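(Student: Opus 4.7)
The plan is to deduce Corollary~\ref{cor3.3} directly from Proposition~\ref{prop3.2}, by verifying its two hypotheses in the present setting ($d=1$). Hypothesis~(ii) is immediate: local class field theory identifies $\Br(k)$ with $\Q/\Z$, hence $H^2(k,\mmu_{\ell^n}) \simeq \Z/\ell^n\Z$, and passing to the inverse limit gives $H^2(k,\Zl(1)) \simeq \Zl$, a free $\Zl$\nobreakdash-module of rank~$1$.

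For hypothesis~(i), I would fix an \'etale cover $\pi \colon Y \to X$ appearing in~$\pi_s$ and aim to show $F^1 H^2(Y,\Ql(1)) = F^2 H^2(Y,\Ql(1))$. Since $\pi_s$ consists of geometrically connected covers, $Y$ is itself a smooth proper geometrically connected curve over~$k$, and Riemann--Hurwitz applied to the \'etale morphism~$\pi$ gives $g_Y - 1 = \deg(\pi)(g-1)$, whence $g_Y \geq 1$. Writing~$J_Y$ for the Jacobian of~$Y$, Kummer theory on~$\Ybar$ identifies $H^1(\Ybar,\Zl(1))$ with the Tate module $T_\ell J_Y$, so the graded piece $F^1/F^2$ of the Leray filtration on $H^2(Y,\Ql(1))$ is a subquotient of $H^1(k,V_\ell J_Y)$. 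Thus the task reduces to showing $H^1(k,V_\ell J_Y)=0$ for every such~$Y$.

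The one real step, and the place where I expect the only subtlety to lie, is this vanishing. I would combine three standard facts about Galois cohomology of $p$\nobreakdash-adic fields with $\ell \neq p$. First, $(T_\ell J_Y)^{G_k} = T_\ell(J_Y(k)[\ell^\infty]) = 0$: by Mattuck's theorem, $J_Y(k)$ admits an open pro-$p$ subgroup of finite index, so its $\ell^\infty$\nobreakdash-torsion is finite for $\ell \neq p$, and Tate modules of finite groups vanish. Hence $H^0(k,V_\ell J_Y)=0$. Second, the Weil pairing gives an isomorphism $(V_\ell J_Y)^* \simeq V_\ell J_Y(-1)$, so local Tate duality identifies $H^2(k,V_\ell J_Y)$ with the $\Ql$\nobreakdash-dual of $H^0(k,V_\ell J_Y)$, and this vanishes as well. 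Third, the local Euler--Poincar\'e formula for $\ell \neq p$ gives $\chi(k,V_\ell J_Y)=0$, which combined with the vanishing of $H^0$ and $H^2$ forces $H^1(k,V_\ell J_Y)=0$. The argument is uniform in~$Y$, so hypothesis~(i) of Proposition~\ref{prop3.2} holds for every $Y \to X$ appearing in~$\pi_s$, and the conclusion $r(\alpha)=0$ follows.
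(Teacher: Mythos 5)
Your proposal is correct, and its overall architecture is exactly that of the paper: verify hypotheses (i) and (ii) of Proposition~\ref{prop3.2}, with (ii) immediate from $H^2(k,\Zl(1))\simeq\Zl$, and (i) reduced via the Leray spectral sequence to the vanishing of $H^1(k,{\rm V}_\ell(J_Y))$ for the covers $Y$ appearing in~$\pi_s$ (the paper phrases this only for $X$ itself, but the argument is uniform, as you note). The only divergence is in how that key vanishing is established. The paper offers two routes: a weight argument (Deligne, SGA~7, via a proper regular model), and an argument using the Kummer sequence for the Jacobian, which exhibits $H^1(k,T_\ell(J))$ as an extension of ${\rm T}_\ell(H^1(k,J))$ by $J(k)\otimeshat\Zl$ and kills both ends by Mattuck's theorem and Tate duality. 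You instead compute $H^0(k,{\rm V}_\ell(J_Y))=0$ (Mattuck) and $H^2(k,{\rm V}_\ell(J_Y))=0$ (local duality plus the Weil pairing) and conclude by the local Euler--Poincar\'e characteristic formula, which vanishes for $\ell\neq p$. This is a third, equally standard, packaging of the same inputs (Mattuck and local Tate duality); it is arguably the cleanest of the three, though it uses the characteristic formula as an extra black box and, unlike the weight argument, does not explain \emph{why} the hypothesis $\ell\neq p$ is essential beyond its appearance in Mattuck's theorem and in the formula $\chi(k,M)=0$.
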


\begin{proof}
Condition~(ii) in Proposition~\ref{prop3.2} is satisfied since $H^2(k, \Zl(1))=\Z_\ell$.
Condition~(i) amounts to the cohomology group $H^1(k,V)$ being zero, where $V=H^1(\bar X,\Ql(1))$.  There are several ways to see this. 

\newcommand{\fibrespec}{{\bar S}}
One may apply Deligne's theory of weights. Let~$k^u$ be the maximal unramified extension of~$k$, let~$R$ (resp.~$R^u$) be the ring of integers of~$k$ (resp.~$k^u$),
let~$\sX$ be a proper regular model of~$X$ over~$R$,
and let~$\fibrespec$ denote the geometric special fiber of~$\sX$.
Put $\sX^u=\sX \otimes_R R^u$ and $X^u=X \otimes_k k^u$.
The eigenvalues of the geometric Frobenius acting on $H^0(k^u,V)$
have weights in $\{-2,-1\}$ according to~\cite[Exp.~IX, Cor.~4.4]{sga71}.
The kernel of the natural map $H^2(X^u,\Ql(1)) \rightarrow
H^3_{\fibrespec}(\sX^u,\Ql(1))$ has dimension~$1$ and weight~$0$ (since it is
the cokernel of the intersection matrix of~$\fibrespec$ tensored
with~$\Ql$). On the other hand, $H^3_{\fibrespec}(\sX^u,\Ql(1))$ has weights
in~$\{1,2\}$; hence $H^2(X^u,\Ql(1))$ has weights in $\{0,1,2\}$ and~$0$
appears only with multiplicity~$1$.  Now $H^2(X^u,\Ql(1))$ is an extension
of~$\Ql$ by $H^1(k^u,V)$.  Therefore $H^1(k^u,V)$ has weights in $\{1,2\}$.
We conclude that~$0$ does not appear as a weight of $H^i(k^u,V)$ for
any $i \in \{0,1\}$, which implies, by inflation-restriction, that $H^1(k,V)=0$.

One may also argue as follows.  Let~$J$ denote the Jacobian variety of~$X$.  Multiplication by~$n$ on~$J$ yields a short exact sequence
$$
\xymatrix{
0 \ar[r] & J(k) \otimeshat \Zl \ar[r] & H^1(k,T) \ar[r] & {\rm T}_\ell(H^1(k,J)) \ar[r] & 0\text{,}
}
$$
where $T=H^1(\Xbar,\Zl(1))$ (the notations ${\rm T}_\ell$ and $\otimeshat$ were defined in~\textsection\ref{secintro}).
According to Mattuck, there exists a subgroup of finite index in $J(k)$
isomorphic to~$R^g$ (see~\cite[I.3.3]{Mil}).
The group on the left-hand side of the above exact sequence is therefore finite.
On the other hand, Tate duality~\cite[I.3.4]{Mil} implies that the group on the right-hand side is
$\Zl$\nobreakdash-dual to~$J(k)$, so that it even vanishes.  Thus $H^1(k,T)$ is finite and hence $H^1(k,V)=0$.
\end{proof}

From Corollary~\ref{cor3.3} we now deduce the main corollary of Proposition~\ref{prop3.2}:

\begin{cor}\label{coralphaalg}
Let~$X$ be a smooth proper geometrically connected curve of genus $g\ge
2$ over a $p$\nobreakdash-adic field~$k$. Let $s \colon G_k\rightarrow
\pi_1(X,x)$ be a section, and~$\ell$ be a prime number different from~$p$. 
Let $\alpha \in H^2(X,\Zl(1))$ denote the cycle class of~$s$.
Then $\alpha$ is an algebraic cycle class, \emph{i.e.}, it belongs to the image of
the cycle class map $\Pic(X) \otimeshat \Zl\rightarrow H^2(X,\Zl(1))$.
\end{cor}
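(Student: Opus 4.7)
The strategy is to reduce to showing that the image of $\alpha$ in $T_\ell \Br(X)$ vanishes, and then exploit the compatibility $\pi_\star \alpha_Y = \alpha$ from Theorem~\ref{thm2.5}. The Kummer exact sequence $0 \to \mmu_{\ell^n} \to \Gm \to \Gm \to 0$, combined with the vanishing of $\varprojlim^1$ for the inverse system $\{\Pic(X)/\ell^n\}$ (whose transition maps are surjective), yields
\begin{equation*}
0 \longrightarrow \Pic(X) \otimeshat \Zl \longrightarrow H^2(X, \Zl(1)) \longrightarrow T_\ell \Br(X) \longrightarrow 0\rlap{\text{,}}
\end{equation*}
so $\alpha$ is algebraic if and only if its image in $T_\ell \Br(X)$ is zero.

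I would next compute $T_\ell \Br(X) \simeq \Zl$. Tsen's theorem gives $\Br(\Xbar) = 0$, so the Hochschild-Serre spectral sequence for $\Gm$ (together with $H^3(k, \kbar^\times) = 0$, as~$k$ has cohomological dimension~$2$) yields
\begin{equation*}
0 \longrightarrow \Br(k) \longrightarrow \Br(X) \longrightarrow H^1(k, \Pic(\Xbar)) \longrightarrow 0\rlap{\text{.}}
\end{equation*}
Applying the left-exact functor $T_\ell$ and using $T_\ell \Br(k) = T_\ell(\Q/\Z) = \Zl$ gives $0 \to \Zl \to T_\ell \Br(X) \to T_\ell H^1(k, \Pic(\Xbar))$. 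From $0 \to J(\kbar) \to \Pic(\Xbar) \to \Z \to 0$ together with $H^1(k, \Z) = 0$, the group $H^1(k, \Pic(\Xbar))$ is a quotient of $H^1(k, J)$, whose $\ell$-Tate module vanishes for $\ell \neq p$ by the argument already used in the proof of Corollary~\ref{cor3.3} (Mattuck plus Tate local duality). Hence $T_\ell \Br(X) \simeq \Zl$, the isomorphism being induced by $\epsilon^\star$.

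For the conclusion, by Theorem~\ref{thm2.5}, for any \'etale cover $\pi \colon Y \to X$ appearing in~$\pi_s$ one has $\pi_\star \alpha_Y = \alpha$. By Riemann--Hurwitz, $Y$ is again a smooth proper curve of genus $\geq 2$, so the previous paragraph applies: $T_\ell \Br(Y) \simeq \Zl$ via $\epsilon_Y^\star$. The projection formula $\pi_\star \epsilon_Y^\star = \pi_\star \pi^\star \epsilon_X^\star = \deg(\pi) \cdot \epsilon_X^\star$ then shows that $\pi_\star \colon T_\ell \Br(Y) \to T_\ell \Br(X)$ is multiplication by $\deg(\pi)$ under these identifications with~$\Zl$. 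On the other hand, \'etale covers appearing in~$\pi_s$ of arbitrarily large $\ell$-power degree can be built from the decomposition $\pi_1(X, x) = \pi_1(\Xbar, x) \rtimes s(G_k)$ by taking $H = U \cdot s(G_k)$ for~$U$ any characteristic open subgroup of $\pi_1(\Xbar, x)$ of $\ell$-power index --- for instance the kernel of $\pi_1(\Xbar, x) \to \pi_1(\Xbar, x)^{\mathrm{ab}}/\ell^n \simeq (\Z/\ell^n)^{2g}$, which has index~$\ell^{2gn}$. Combining these, the image of~$\alpha$ in $T_\ell \Br(X) \simeq \Zl$ is divisible by~$\ell^n$ for every~$n$, hence vanishes.

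The main obstacle I anticipate is the clean computation $T_\ell \Br(X) \simeq \Zl$ with the correct functoriality in~$X$: once this identification is in place (and with it the multiplication-by-degree behaviour of $\pi_\star$), the remaining steps --- the Kummer sequence, the projection formula, and the construction of high-degree covers --- are essentially formal.
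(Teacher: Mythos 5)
Your reduction via the Kummer sequence is the same as the paper's, and your overall strategy---extract divisibility of the Brauer obstruction from covers of large $\ell$\nobreakdash-power degree appearing in~$\pi_s$, using $\pi_\star\alpha_Y=\alpha$---is exactly the mechanism of the paper's Proposition~\ref{prop3.2}. But there is a genuine gap in the step where you identify ${\rm T}_\ell\Br(Y)$ with~$\Zl$ \emph{via}~$\epsilon_Y^\star$. The Hochschild--Serre sequence for~$\Gm$ does not begin with $0\to\Br(k)$; it reads
$$
\Pic(\Ybar)^{G_k}\xrightarrow{\;\delta\;}\Br(k)\longrightarrow\Br(Y)\longrightarrow H^1(k,\Pic(\Ybar))\longrightarrow 0\rlap{\text{,}}
$$
and the differential~$\delta$ is nonzero precisely when some Galois-invariant divisor class on~$\Ybar$ is not represented by a $k$\nobreakdash-rational divisor class. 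Its image $K_Y=\ker(\Br(k)\to\Br(Y))$ is killed by the index~$I_Y$ of~$Y$ but is in general nontrivial, and one computes that $\epsilon_Y^\star\colon{\rm T}_\ell\Br(k)\to{\rm T}_\ell\Br(Y)$ is injective with cokernel of order $\#K_Y[\ell^\infty]$, not an isomorphism. Under your identifications, $\pi_\star$ is therefore multiplication by $\deg(\pi)$ only up to an error factor dividing $\ell^{v_\ell(I_Y)}$, and since one only knows $I_X\divise I_Y\divise\deg(\pi)I_X$, this factor can a priori grow exactly as fast as $v_\ell(\deg\pi)$, destroying the claimed divisibility of the image of~$\alpha$ in ${\rm T}_\ell\Br(X)$. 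Worse, bounding this error is essentially the statement being proved: it is Corollary~\ref{corstix} (the index is a power of~$p$) that guarantees $K_Y[\ell^\infty]=0$ for $\ell\neq p$, and that corollary is deduced from Corollary~\ref{coralphaalg}. So the argument is circular at its key point. (The existence of the section does give injectivity of $H^2(k,\mmu_n)\to H^2(Y,\mmu_n)$, but not of $\Br(k)\to\Br(Y)$: the discrepancy is exactly the group of constant classes that become algebraic on~$Y$.)

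The paper sidesteps this by running the divisibility argument in a group whose integral structure does not move with~$Y$: the section furnishes a retraction $r\colon H^2(X,\Zl(1))\to H^2(k,\Zl(1))=\Zl$, and Proposition~\ref{prop3.2} shows that $r(\alpha)=\deg(\pi)\,r_Y(\alpha_Y)$ (using $H^1(k,V)=0$ to control the error term $\deg(\pi)\alpha_Y-\pi^\star\alpha$), whence $r(\alpha)=0$ by infinite divisibility in~$\Zl$. One then notes that $\alpha-\omega/(2g-2)$ has degree~$0$, hence lies in the image of~$\epsilon^\star$ (again because $H^1(k,V)=0$), and is killed by~$r$ (Corollary~\ref{cor3.3} combined with the adjunction identity $r(\alpha+\omega)=0$ of Lemma~\ref{lemmaalphaomega}); so $\alpha=\omega/(2g-2)$, which is algebraic. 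To salvage your route you would need an identification of ${\rm T}_\ell\Br(Y)$ whose lattice is uniform in~$Y$---which is in effect what the retraction~$r$ supplies.
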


\begin{proof}
Kummer theory yields an exact sequence
\begin{equation}
\label{exsekum1}
\xymatrix{
0 \ar[r] & \Pic(X) \otimeshat \Zl \ar[r] & H^2(X,\Zl(1)) \ar[r] & T_\ell(\Br(X)) \ar[r] & 0\text{.}
}
\end{equation}
Since $T_\ell(\Br(X))$ is torsion-free, one may as well work with the exact sequence
\begin{equation}
\label{exsekum2}
\xymatrix{
0 \ar[r] & \Pic(X) \otimeshat \Ql \ar[r] & H^2(X,\Ql(1)) \ar[r] & V_\ell(\Br(X)) \ar[r] & 0\text{;}
}
\end{equation}
it is then enough to show that $\alpha \in H^2(X,\Ql(1))$ lies in the image of $\Pic(X) \otimeshat \Ql$.
Let $\omega \in H^2(X,\Ql(1))$ denote the cycle class of the canonical sheaf.
Then the class $\alpha-\omega/(2g-2)$ has degree~$0$, \emph{i.e.}, it vanishes in $H^2(\Xbar,\Ql(1))=\Ql$.
Therefore it belongs to the image of $\epsilon^\star \colon H^2(k,\Ql(1)) \rightarrow
H^2(X,\Ql(1))$, since $H^1(k,V)=0$ (in the notation of the proof of Corollary~\ref{cor3.3}).
Now Corollary~\ref{cor3.3} and Lemma~\ref{lemmaalphaomega} below imply that it also belongs to the kernel
of $r \colon H^2(X,\Ql(1)) \rightarrow H^2(k,\Ql(1))$.  As a consequence, it vanishes; hence $\alpha=\omega/(2g-2)$
in $H^2(X,\Ql(1))$, which
establishes the corollary.
\end{proof}

\begin{lem}\label{lemmaalphaomega}
Let~$X$ be a smooth proper geometrically connected curve of genus $g \geq 1$ over a field~$k$.
Let $s \colon G_k \rightarrow \pi_1(X,x)$ be a section.  Let $\Lambda=\Z/N\Z$, where~$N$ is invertible in~$k$.
We still denote by $\alpha \in H^2(X,\Lambda(1))$ the cycle class of~$s$, by $\omega \in H^2(X,\Lambda(1))$ the cycle class
of the canonical sheaf, and by $r \colon H^2(X,\Lambda(1)) \rightarrow H^2(k,\Lambda(1))$ the retraction induced by~$s$.
Then $r(\alpha+\omega)=0$.
\end{lem}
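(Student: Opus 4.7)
The plan is to compute $\pi^*(\alpha+\omega)$ for an \'etale cover $\pi\colon Y\to X$ appearing high enough in~$\pi_s$ by pulling back the defining identity of~$\alpha$ along the graph of~$\pi$, using the self-intersection formula for the diagonal, and then to deduce $r(\alpha+\omega)=0$ from the injectivity statement of Proposition~\ref{prop2.2}.

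By the construction of~$\alpha$ in Theorem~\ref{thm2.5} (with $\Xhat=X$ since~$X$ is proper), for any \'etale cover $\pi\colon Y\to X$ appearing high enough in~$\pi_s$ the class~$\alpha$ satisfies
$$
p_1^*\alpha \;=\; (1\times\pi)^*[\Delta]
$$
in $H^2(X\times Y,\Lambda(1))$, where $p_1\colon X\times Y\to X$ is the first projection and $[\Delta]\in H^2(X\times X,\Lambda(1))$ is the cycle class of the diagonal of~$X$. I~would pull this identity back along the graph embedding $\delta_Y=(\pi,\mathrm{id}_Y)\colon Y\to X\times Y$: the left-hand side becomes $(p_1\circ\delta_Y)^*\alpha=\pi^*\alpha$, while the right-hand side, using the identity $(1\times\pi)\circ\delta_Y=\delta_X\circ\pi$ with $\delta_X\colon X\to X\times X$ the diagonal of~$X$, becomes $\pi^*\bigl(\delta_X^*[\Delta]\bigr)$. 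The self-intersection formula for the diagonal of a smooth variety in \'etale cohomology gives $\delta_X^*[\Delta]=c_1(T_X)$, and on a curve this equals $-c_1(\Omega^1_{X/k})=-\omega$. Hence $\pi^*(\alpha+\omega)=0$ in $H^2(Y,\Lambda(1))$.

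Proposition~\ref{prop2.2}, applied with $\Vhat=V=\Spec(k)$, asserts that for $\pi\colon Y\to X$ high enough in~$\pi_s$ the pullback $(\epsilon\pi)^*\colon H^2(k,\Lambda(1))\to H^2(Y,\Lambda(1))$ is injective, and by the very definition of the retraction~$r$ one has $(\epsilon\pi)^*r(\alpha+\omega)=\pi^*(\alpha+\omega)$. Combined with the vanishing just established, this yields $r(\alpha+\omega)=0$. The only nontrivial ingredient is the self-intersection formula $\delta_X^*[\Delta]=c_1(T_X)$ in \'etale cohomology, which is a standard consequence of the compatibility of the \'etale cycle class map with intersection products (and of the fact that the normal bundle of the diagonal is the tangent bundle).
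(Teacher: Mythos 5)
Your proof is correct and follows essentially the same route as the paper: both start from the defining identity $p^\star\alpha=(1\times\pi)^\star[\Delta]$ and restrict along the graph embedding of~$\pi$ to get $\pi^\star(\alpha+\omega)=0$, then conclude via the retraction~$r$. The only cosmetic difference is that you invoke the self-intersection formula $\delta_X^\star[\Delta]=c_1(T_X)=-\omega$ on $X\times X$ and transport it by functoriality, whereas the paper applies the adjunction formula directly to the graph of~$\pi$ inside $X\times Y$; these amount to the same computation.
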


\begin{proof}
Let $\pi \colon Y \rightarrow X$ be an \'etale cover appearing high enough in~$\pi_s$.
Denote by $p\colon X \times Y \rightarrow X$ and $q \colon X \times Y \rightarrow Y$ the two projections,
and by $i \colon Y \rightarrow X \times Y$ the embedding $\pi \times 1$.
Let $\gamma \in H^2(X \times Y,\Lambda(1))$ be the cycle class of the graph of~$\pi$.
The adjunction formula reads $i^\star (\gamma + p^\star \omega + q^\star \omega_Y)=\omega_Y$, where $\omega_Y \in H^2(Y,\Lambda(1))$
denotes the cycle class of the canonical sheaf of~$Y$.  It follows that $i^\star \gamma = -\pi^\star \omega$.
Now by the definition of~$\alpha$, one has $p^\star\alpha = \gamma$, hence $\pi^\star\alpha=i^\star \gamma$.
Thus $\pi^\star (\alpha+\omega)=0$, which proves the lemma.
\end{proof}

\newcommand{\citestix}{\cite[Theorem~2]{Stix2}}
\begin{cor}[see Stix~\citestix]\label{corstix}
Let~$X$ be a smooth proper geometrically connected curve of genus $g\ge 2$
over a $p$\nobreakdash-adic field~$k$. Assume the natural map $\pi_1(X,x) \rightarrow G_k$ admits a section.
Then the index of~$X$ is a power of~$p$ (\emph{i.e.}, there exists a divisor on~$X$ whose degree is a power of~$p$).
\end{cor}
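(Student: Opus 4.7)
The plan is to derive Corollary~\ref{corstix} directly from Corollary~\ref{coralphaalg} by a reduction modulo~$\ell$ of the degree map on~$\Pic(X)$. Let $I$ denote the index of~$X$; by definition $I$ is the positive generator of the image of $\deg \colon \Pic(X) \rightarrow \Z$. It suffices to show that $\ell \nmid I$ for every prime $\ell \neq p$, as this forces $I$ to be a power of~$p$.

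First I would fix a prime $\ell \neq p$ and apply Corollary~\ref{coralphaalg} to the given section $s \colon G_k \rightarrow \pi_1(X,x)$: this yields an element $\alpha_\ell \in \Pic(X) \otimeshat \Zl$ whose image in $H^2(X,\Zl(1))$ is the cycle class $\alpha$ of~$s$. Next I would identify the composition
$$
\Pic(X) \otimeshat \Zl \longrightarrow H^2(X,\Zl(1)) \longrightarrow H^2(\Xbar,\Zl(1)) = \Zl
$$
with the $\Zl$\nobreakdash-linear extension of $\deg \colon \Pic(X) \to \Z$; this is classical, since on the geometric proper curve~$\Xbar$ the cycle class map $\Pic(\Xbar) \otimeshat \Zl \to H^2(\Xbar,\Zl(1))$ is, via the identification $H^2(\Xbar,\Zl(1))=\Zl$, precisely the degree. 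Combined with the assertion of Theorem~\ref{thm2.5} that the image of~$\alpha$ in $H^2(\Xbar,\Zl(1))$ equals~$1$, this gives $\deg(\alpha_\ell)=1$ in~$\Zl$.

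Finally, I would reduce modulo~$\ell$. The image of $\deg \colon \Pic(X) \to \Z$ equals $I\Z$, hence the image of the induced map $\Pic(X) \otimeshat \Zl \to \Z/\ell\Z$ is $(I\Z+\ell\Z)/\ell\Z$, which vanishes precisely when $\ell \divise I$. Since $\deg(\alpha_\ell) \equiv 1 \not\equiv 0 \pmod{\ell}$, this forces $\ell \nmid I$. Letting~$\ell$ range over all primes distinct from~$p$ yields the corollary.

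There is no serious obstacle once Corollary~\ref{coralphaalg} is in hand: the entire content of the argument is that algebraicity of the cycle class of~$s$ in $\ell$\nobreakdash-adic \'etale cohomology allows one to pick an honest representative divisor on~$X$ whose degree, being congruent to~$1$ modulo~$\ell$, witnesses that $\ell \nmid I$. The only point to check carefully is the compatibility between the Kummer-theoretic cycle class map and the classical degree on~$\Xbar$, which is standard.
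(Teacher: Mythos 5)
Your proposal is correct and follows essentially the same route as the paper: fix $\ell\neq p$, use Corollary~\ref{coralphaalg} to realize the degree-one cycle class of~$s$ as an element of $\Pic(X)\otimeshat\Zl$ (the paper works directly with its reduction in $\Pic(X)/\ell\Pic(X)$), and conclude that a representing divisor has degree $\equiv 1 \pmod{\ell}$, so $\ell\nmid I$. The only cosmetic difference is that you reduce the degree modulo~$\ell$ at the end rather than reducing the cohomology class first.
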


\begin{proof}
It suffices to show that for any prime number~$\ell$ different from~$p$,
there exists a divisor on~$X$ whose degree is prime to~$\ell$.  Fix a prime $\ell\neq p$.
Choose a section $s \colon G_k \rightarrow \pi_1(X,x)$.  Denote by $\alpha \in H^2(X,\Zl(1))$ the $\ell$\nobreakdash-adic cycle class
of~$s$ and by $\alpha_1 \in H^2(X,\Z/\ell\Z(1))$ the mod~$\ell$ cycle class of~$s$.
According to Corollary~\ref{coralphaalg}, $\alpha$ is algebraic; therefore so is $\alpha_1$.
In other words $\alpha_1 \in \Pic(X)/\ell\Pic(X)$.  Now any divisor on~$X$ whose class
in $\Pic(X)/\ell\Pic(X)$ equals~$\alpha_1$
has degree congruent to~$1$ modulo~$\ell$,
since the image of $\alpha_1$ in $H^2(\Xbar,\Z/\ell\Z(1))=\Z/\ell\Z$ is equal to~$1$.
\end{proof}

\begin{rmks}\label{rmks37}
(i) Let~$X$ and~$s$ be as in Corollary~\ref{coralphaalg}.  Let~$\ell$ be a prime number (possibly equal to~$p$).
Lichtenbaum--Tate duality asserts that $\Pic(X) \otimeshat \Ql$ is a maximal totally isotropic subspace of $H^2(X,\Ql(1))$
with respect to the cup-product pairing (which takes values in $H^4(X,\Ql(2))=\Ql$).
If $\ell \neq p$ then this subspace is simply the line generated by~$\omega$; hence in this case,
in order to prove that a class of $H^2(X,\Ql(1))$
is algebraic, it suffices to check that it is orthogonal to~$\omega$.   This is precisely what we do in the proof
of Corollary~\ref{coralphaalg}. Indeed the condition $\omega \cup \alpha=0$ is equivalent to $r(\omega)=0$.
More generally it follows from the definition of~$r$ that $r(x)=x \cup \alpha$ for any $x \in H^2(X,\Ql(1))$.

(ii) One cannot expect the statements of Corollaries~\ref{coralphaalg} and~\ref{corstix} to hold in the case of genus~$1$ curves without the assumption that $\ell\neq p$
(and in fact they fail).  However, for $\ell \neq p$, these statements do hold for genus~$1$ curves.  Indeed,
 in~\cite[Theorem~2]{Stix2}, the genus~$g$ is only assumed to
be~$\geq 1$.
Jakob Stix points out to us that the proof of Corollary~\ref{coralphaalg} given above can be adapted
to cover uniformly the $g=1$ and $g>1$ cases in the following way. Let $\lambda \in \Pic(X) \otimeshat \Ql$ be the class which has degree~$1$
(when $g>1$, it is given by $\lambda=\omega/(2g-2)$).
Then $\lambda-\alpha \in H^2(X,\Ql(1))$ has degree~$0$, hence it belongs to the image of~$\epsilon^\star$, hence $(\lambda-\alpha) \cup (\lambda-\alpha)=0$.
In view of Corollary~\ref{cor3.3} and of Remark~\ref{rmks37}~(i), this implies that $\alpha \cup \lambda=0$ and hence that $\alpha=\lambda$
in $H^2(X,\Ql(1))$.

(iii) Let~$X$, $s$ and~$\ell$ be as in Corollary~\ref{coralphaalg}, and let~$\sX$ denote a proper regular model of~$X$ over the ring of integers of~$k$.
It follows from Corollary~\ref{coralphaalg} that the $\ell$\nobreakdash-adic cycle class $\alpha \in H^2(X,\Zl(1))$ admits liftings to $H^2(\sX,\Zl(1))$.
Indeed the restriction map $\Pic(\sX) \otimeshat \Zl \rightarrow \Pic(X) \otimeshat \Zl$ is surjective, and its composition with the cycle class
map $\Pic(X) \otimeshat\Zl \rightarrow H^2(X,\Zl(1))$ factors through $H^2(\sX,\Zl(1))$.
Now if the $p$\nobreakdash-adic analogue of Grothendieck's section conjecture holds true,
then~$\alpha$ should even admit a canonical lifting to $H^2(\sX,\Zl(1))$; to wit, if~$s$ comes from a rational point $a \in X(k)$, then the cycle class in $H^2(\sX,\Zl(1))$
of the closure of~$a$ in~$\sX$ is a lifting of~$\alpha$.  Starting from an
arbitrary section~$s$, we are unable to construct such a canonical lifting in general.
We can do it only under the assumption that none of the irreducible components
of the geometric special fiber of~$\sX$ is rational (using a variant of the arguments employed in the proof of Theorem~\ref{thm2.5}).

(iv) Corollary~\ref{corstix} has an analogue over the real numbers.  Namely, many authors have
noticed that the following statement holds: let~$X$ be a smooth proper geometrically connected curve of genus $g\geq 1$ over the field~$k=\R$ of real numbers;
assume $\pi_1(X,x)\rightarrow G_k$ admits a section; then $X(k)\neq \emptyset$. (This is the ``real analogue of the weak section conjecture''. See~\cite[Appendix~A]{Stix2} for a summary.)
To the best of our knowledge, all previous proofs rely on nontrivial theorems in real algebraic geometry (by Witt, Artin, Verdier, or Cox).
Here we remark that Tsen's theorem (according to which the Brauer group of a complex curve vanishes) suffices.  Indeed it implies that $\Br(X)$ has exponent~$2$,
and hence that ${\rm T}_2(\Br(X))=0$.  Now let $s \colon G_k \rightarrow \pi_1(X,x)$ be a section.
According to~(\ref{exsekum1}),
the cycle class of~$s$ in $H^2(X,\Z_2(1))$
belongs to $\Pic(X) \otimeshat \Z_2$. As in the proof of Corollary~\ref{corstix} we conclude that~$X$ has odd index and therefore
that $X(k)\neq\emptyset$.
\end{rmks}

\section{Nilpotent completion and liftings of cycle classes of sections}
\label{section4}
\subsection{Beilinson's construction}\label{sub3.1}

Let~$X$ be a complex manifold and let $a \in X$.  Denote by~$I$ the augmentation ideal of the group algebra $\Q[\pitop(X,a)]$.
For any $n \geq 1$, Beilinson constructed a complex of sheaves of $\Q$\nobreakdash-vector spaces on~$X^n$
whose $n$\nobreakdash-th hypercohomology group is canonically dual to $\Q[\pitop(X,a)]/I^{n+1}$.
His construction is described in~\cite[Section~3]{DelGon}.
We recall it briefly.
Consider the manifold $X \times X^n \times X$ with coordinates $(t_0,\dots,t_{n+1})$.  For $i \in \{0,\dots,n\}$, let~$A_i$ denote the submanifold defined
by $t_i=t_{i+1}$.  For $J \subseteq \{0,\dots,n\}$, let $A_J = \cap_{j \in J} A_j$ and let $\Q_{A_J}$ denote the direct image of the constant sheaf~$\Q$ by the
inclusion $A_J \hookrightarrow X \times X^n \times X$.
Let $\sB(n)$ denote the complex of sheaves on $X \times X^n \times X$
defined by
$\sB(n)^p=\bigoplus_{J \subseteq \{0,\dots,n\},\# J = p} \Q_{A_J}$
for $p \in \{0,\dots,n\}$
and $\sB(n)^p=0$ for all other~$p$; the differential $\sB(n)^p \rightarrow \sB(n)^{p+1}$ is the sum, over all~$J$ and all~$c$ such that $c\not\in J$,
of $(-1)^{\#\{m \in J;m<c\}}$ times the restriction map $\Q_{A_J} \rightarrow \Q_{A_{J \cup \{c\}}}$.
For $(b,a) \in X \times X$, let~$i_{ba}$ denote the closed immersion $X^n = \{b\} \times X^n \times \{a\} \hookrightarrow X \times X^n \times X$
and let $j_{ba}$ denote the open immersion
$X^n \setminus  i_{ba}^{-1}\left(\bigcup_{i=0}^n A_i\right) \hookrightarrow X^n$.
Set $\sB(n)_{ba}=i_{ba}^\star \sB(n)$.
If $b \neq a$, then $\sB(n)_{ba}$ is quasi-isomorphic to $j_{ba!}\Q$.  On the other hand, if $b=a$,
the defect of exactness of~$\sB(n)$ at $\sB(n)^n$ provides a map $\sB(n) \rightarrow \Q_{A_{\{0,\dots,n\}}}[-n]$
which in hypercohomology induces a map $\theta_{aa} \colon \H^n(X^n,\sB(n)_{aa}) \rightarrow \Q$.
Proposition~3.4 of \emph{loc.\ cit.}\ then asserts that the $\Q$\nobreakdash-vector space $\H^n(X^n,\sB(n)_{aa})$, endowed with~$\theta_{aa}$,
is canonically dual to $\Q[\pitop(X,a)]/I^{n+1}$, endowed with the map $\Q \rightarrow \Q[\pitop(X,a)]/I^{n+1}$ which sends~$1$ to~$1$.

\subsection{Replacing the base point by the universal cover}\label{sub3.2}
We first remark that when~$X$ is a~\kpi,
Beilinson's construction can be reformulated in terms of the (topological) universal cover of~$X$ at~$a$ instead of the point~$a$ itself.
Let~$X$ be as above.  For $a \in X$, let $\pitildetop_a \colon (\Xtildetop_a,\atilde) \rightarrow (X,a)$ denote the (topological) universal pointed cover of the pointed space $(X,a)$.
Its fiber above~$a$ is $\pitop(X,a)$.

\begin{defn}\label{defn3.1}
(i) We say that~$X$ is \emph{topologically a~\kpi} if $H^i(\Xtildetop_a,\Z)=0$ for all $i \geq 1$.

(ii) Let $\pi \colon Y \rightarrow X$ be a topological cover.  Let $p \colon Y \times X^n \times Y \rightarrow X \times X^n \times X$ denote
the map $\pi \times 1 \times \pi$.  We put $\sB(n)(Y)=p^\star \sB(n)$.
The defect of exactness of the complex $\sB(n)(Y)$ at $\sB(n)(Y)^n$ provides a map
$\sB(n)(Y) \rightarrow \iota_\star \Q[-n]$ where $\iota \colon Y \rightarrow Y \times X^n \times Y$ is the closed immersion $1 \times \pi^n \times 1$.
We denote by $\theta_\pi \colon \H^n(Y \times X^n \times Y, \sB(n)(Y)) \rightarrow \Q$ the linear form it induces in hypercohomology.
\end{defn}
\begin{prop}\label{propcorrectcohomology}
Assume~$X$ is topologically a~\kpi.  For any $a \in X$, the $\Q$\nobreakdash-vector space
$\H^n(X^n,\sB(n)_{aa})$, endowed with the linear form~$\theta_{aa}$,
is canonically isomorphic to
$\H^n(\Xtildetop_a \times X^n \times \Xtildetop_a, \sB(n)(\Xtildetop_a))$ endowed with~$\theta_{\pitildetop_a}$.
\end{prop}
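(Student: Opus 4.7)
The plan is to construct a natural restriction map from the right-hand side to the left-hand side, and prove it is an isomorphism by exploiting the fact that $Y := \Xtildetop_a$ has trivial positive-degree $\Q$-cohomology, which follows from Definition~\ref{defn3.1}~(i). The base point $\tilde a$ above $a$ gives a closed immersion $j \colon X^n = \{\tilde a\} \times X^n \times \{\tilde a\} \hookrightarrow Y \times X^n \times Y$ satisfying $p \circ j = i_{aa}$, where $p = \pi \times 1 \times \pi$. Hence $j^\star \sB(n)(Y) = j^\star p^\star \sB(n) = i_{aa}^\star \sB(n) = \sB(n)_{aa}$, and pullback along $j$ defines a natural map $\rho \colon \H^n(Y \times X^n \times Y, \sB(n)(Y)) \to \H^n(X^n, \sB(n)_{aa})$. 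The compatibility $\theta_{aa} \circ \rho = \theta_\pi$, after identifying $H^0(Y, \Q) = \Q$ via connectedness of $Y$, follows from the functoriality of the defect maps: $j$ sends the point corresponding to $\tilde a \in \iota(Y)$ to a point which maps under $i_{aa}$ to the point $(a,\ldots,a) \in A_{\{0,\ldots,n\}}$ appearing in the defect map for $\sB(n)_{aa}$.

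To prove that $\rho$ is an isomorphism, I would compare the two hypercohomology spectral sequences
$$E_1^{p,q}(aa) = \bigoplus_{|J|=p} H^q\bigl(i_{aa}^{-1}(A_J)\bigr) \Rightarrow \H^{p+q}(X^n, \sB(n)_{aa})$$
and
$$E_1^{p,q}(Y) = \bigoplus_{|J|=p} H^q\bigl(p^{-1}(A_J)\bigr) \Rightarrow \H^{p+q}(Y \times X^n \times Y, \sB(n)(Y))$$
via the morphism of spectral sequences induced by $\rho$. For each $J \subseteq \{0,\ldots,n\}$ of size $p \leq n$, partitioning $\{0,1,\ldots,n+1\}$ into the equivalence classes generated by the relations $i \sim i+1$ for $i \in J$ yields explicit descriptions $i_{aa}^{-1}(A_J) \cong X^{n-p}$ (each equivalence class containing $0$ or $n+1$ being forced to equal $a$) and $p^{-1}(A_J) \cong Y \times X^{n-p} \times Y$ (the two boundary classes being freely parametrised by $\tilde y_0, \tilde y_{n+1} \in Y$, whose images under $\pi$ are imposed as the common values of the constrained middle coordinates). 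Since $Y$ is $\Q$-acyclic, K\"unneth gives $H^q(p^{-1}(A_J)) \cong H^q(X^{n-p}) \cong H^q(i_{aa}^{-1}(A_J))$, and the restriction along $\tilde a \in Y$ induced by $\rho$ becomes the identity under these identifications. A direct check shows that the $d_1$-differentials, which are alternating sums of restrictions for inclusions $A_{J'} \subseteq A_J$, are compatible under these identifications. Hence $\rho$ induces an isomorphism at $E_1$, hence at every subsequent page, and in particular on the $n$-th hypercohomology.

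The main obstacle is the bookkeeping in the cases where $J$ contains one or both of the endpoints $0$ or $n$: there the identification $p^{-1}(A_J) \cong Y \times X^{n-p} \times Y$ is not a straightforward Cartesian product, since the constraints $\pi(\tilde y_0) = x_1$ or $\pi(\tilde y_{n+1}) = x_n$ couple a $Y$-factor with a middle coordinate. The verification that the restriction maps between the various strata $p^{-1}(A_J) \subseteq p^{-1}(A_{J''})$ match, on cohomology, those between the $i_{aa}^{-1}(A_J) \subseteq i_{aa}^{-1}(A_{J''})$ under the K\"unneth identifications, ultimately rests on the fact that $\pi^\star \colon H^\star(X,\Q) \to H^\star(Y,\Q)$ is the identity in degree zero and vanishes in positive degrees, which is precisely the topological $K(\pi,1)$ hypothesis on $X$.
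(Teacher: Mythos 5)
Your proof is correct and takes essentially the same route as the paper's: both restrict along the closed immersion $\{\atilde\}\times 1\times\{\atilde\}$, observe that this pulls $\sB(n)(\Xtildetop_a)$ back to $\sB(n)_{aa}$, and reduce term by term in the complex (equivalently, on the $E_1$ page of the stupid-filtration spectral sequence, where compatibility with the differentials is automatic for a map induced by a morphism of complexes of sheaves) to the statement that restriction is an isomorphism on the cohomology of each stratum $p^{-1}(A_J)$, which follows from the acyclicity of $\Xtildetop_a$ together with the K\"unneth formula. The paper compresses this into two lines; your explicit description of the strata and of the boundary cases $0,n\in J$ simply spells out what it leaves implicit.
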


\begin{proof}
Let $i \colon X^n \hookrightarrow \Xtildetop_a \times X^n \times \Xtildetop_a$
denote the closed immersion $\{\atilde\} \times 1 \times \{\atilde\}$.
The inverse image of $\sB(n)(\Xtildetop_a)$ by~$i$ is equal to $\sB(n)_{aa}$.
As a consequence, to establish the proposition
it suffices to check that the restriction map
$$\H^n(\Xtildetop_a \times X^n \times \Xtildetop_a, \sB(n)(\Xtildetop_a)) \longrightarrow\H^n(X^n, i^\star\sB(n)(\Xtildetop_a))$$
is an isomorphism.  For this, in view of the definition of~$\sB(n)$, it suffices to check that the restriction map
$$H^m(\Xtildetop_a \times X^n \times \Xtildetop_a, p^\star \Q_{A_J}) \longrightarrow H^m(X^n, i^\star p^\star \Q_{A_J})$$
is an isomorphism for any~$m$ and for any $J \varsubsetneq \{0,\dots,n\}$, where~$p$ is as in Definition~\ref{defn3.1}~(ii).
Now this is a direct consequence of the hypothesis that~$X$ is topologically a~\kpi{} together with the K\"unneth formula.
\end{proof}

As a consequence, the $\Q$\nobreakdash-vector space $\H^n(\Xtildetop_a \times X^n \times \Xtildetop_a, \sB(n)(\Xtildetop_a))$, endowed with~$\theta_{\pitildetop_a}$,
is canonically dual to $\Q[\pitop(X,a)]/I^{n+1}$, endowed with the map $\Q \rightarrow \Q[\pitop(X,a)]/I^{n+1}$ which sends~$1$ to~$1$.

\subsection{In the algebraic setting}\label{sub3.3}
Let~$X$ be a smooth geometrically irreducible separated variety of dimension~$d$ over a field~$k$ with separable closure~$\kbar$.  Let~$N$ be an integer invertible in~$k$ and let $\Lambda=\Z/N\Z$.
Fix $n \geq 1$.  For $J \subseteq \{0,\dots,n\}$, let $A_J \subseteq X \times X^n \times X$ be, as in~\textsection\ref{sub3.1}, the subvariety defined by the equations $t_j=t_{j+1}$ for $j\in J$,
and let $\Lambda_{A_J}$ denote the direct image of the constant \'etale sheaf~$\Lambda$ by the inclusion $A_J \hookrightarrow X \times X^n \times X$.
Let $\sB^e(n)$ (where~$e$ stands for ``\'etale'') denote the complex of \'etale sheaves on $X \times X^n \times X$ defined in the same way as~$\sB(n)$ was defined in~\textsection\ref{sub3.1}, except
that~$\Q_{A_J}$ is now replaced by~$\Lambda_{A_J}$.   
For $a \in X(\kbar)$, we denote by $\sB^e(n)_{aa}$ the inverse image of~$\sB^e(n)$ by the natural map $\Xbar^n = \{a\} \times \Xbar^n \times \{a\} \rightarrow X \times X^n \times X$,
where $\Xbar=X\otimes_k \kbar$.  The natural map $\sB^e(n) \rightarrow \Lambda_{A_{\{0,\dots,n\}}}[-n]$
induces a linear form $\theta_{aa} \colon \H^n(\Xbar^n,\sB^e(n)_{aa}) \rightarrow \Lambda$.
Finally, if $\pi \colon Y \rightarrow X$ is an \'etale cover, we denote
by $p \colon Y \times X^n \times Y \rightarrow X \times X^n \times X$ the map $\pi \times 1 \times \pi$.
Definition~\ref{defn3.1}~(ii) still makes sense. It yields
a linear form $\theta_\pi \colon \H^n(Y \times X^n \times Y, \sB^e(n)(Y)) \rightarrow \Lambda$, where $\sB^e(n)(Y)=p^\star \sB^e(n)$.

With these definitions in hand, we may formulate a statement analogous to Proposition~\ref{propcorrectcohomology}. Its proof, which we omit, is also entirely analogous.
\begin{prop}\label{prop3.4}
Assume~$X$ is a \kpi{}. For any $a \in X(\kbar)$, the $\Lambda$\nobreakdash-module
$\H^n(\Xbar^n,\sB^e(n)_{aa})$, endowed with $\theta_{aa}$, is canonically isomorphic
to
\begin{equation}
\varinjlim \H^n(\Ybar \times \Xbar^n \times \Ybar, \sB^e(n)(\Ybar))
\end{equation}
endowed with the linear form $\varinjlim \theta_\pi$.
The direct limit ranges over all factorisations $\Xtilde_a \rightarrow \Ybar \rightarrow \Xbar$
of $\pitilde_a \colon \Xtilde_a \rightarrow \Xbar$ with~$\Ybar$ finite over~$\Xbar$.
\end{prop}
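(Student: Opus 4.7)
\emph{Proof sketch.} The strategy parallels that of Proposition~\ref{propcorrectcohomology}, with the tower of~$\Ybar$'s playing the role of the topological universal cover. First I~would choose, compatibly as~$\Ybar$ varies, a lift $\tilde a_Y \in \Ybar(\kbar)$ of~$a$; equivalently, fix once and for all a $\kbar$-point of~$\Xtilde_a$ above~$a$. This yields closed immersions
\begin{equation*}
i_Y \colon \Xbar^n \hookrightarrow \Ybar \times \Xbar^n \times \Ybar,\qquad (t_1, \dots, t_n) \longmapsto (\tilde a_Y, t_1, \dots, t_n, \tilde a_Y),
\end{equation*}
whose composition with $p \colon \Ybar \times \Xbar^n \times \Ybar \to \Xbar \times \Xbar^n \times \Xbar$ agrees with the natural map $\Xbar^n = \{a\} \times \Xbar^n \times \{a\} \hookrightarrow X \times X^n \times X$ used to define $\sB^e(n)_{aa}$; hence $i_Y^\star \sB^e(n)(\Ybar) = \sB^e(n)_{aa}$. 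The resulting restriction maps in hypercohomology are compatible with the transition morphisms in the direct system, and commute with the canonical map $\sB^e(n) \to \Lambda_{A_{\{0, \dots, n\}}}[-n]$ producing the linear forms. They therefore assemble into a morphism from $\varinjlim \H^n(\Ybar \times \Xbar^n \times \Ybar, \sB^e(n)(\Ybar))$ to $\H^n(\Xbar^n, \sB^e(n)_{aa})$ respecting $\varinjlim \theta_{\pi}$ and~$\theta_{aa}$.

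Second, I~would reduce the isomorphism claim to a statement about constant sheaves. The $p$-th term of~$\sB^e(n)$ is $\bigoplus_{|J|=p} \Lambda_{A_J}$ with $0 \le p \le n$, so every~$J$ that appears is necessarily a \emph{proper} subset of $\{0,\dots,n\}$. The hypercohomology spectral sequence filtered by~$|J|$ then reduces the claim to showing that for every $J \subsetneq \{0, \dots, n\}$ and every $m \geq 0$, the map
\begin{equation*}
\varinjlim_{\Ybar} H^m\bigl(\Ybar \times \Xbar^n \times \Ybar, p^\star \Lambda_{A_J}\bigr) \longrightarrow H^m\bigl(\Xbar^n, (p \circ i_Y)^\star \Lambda_{A_J}\bigr)
\end{equation*}
is an isomorphism. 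Since $p = \pi \times 1 \times \pi$ is \'etale, $p^\star \Lambda_{A_J} = \Lambda_{p^{-1}(A_J)}$, and the equivalence relation on $\{0, \dots, n+1\}$ generated by $\{j \sim j+1 : j \in J\}$ places~$0$ and~$n+1$ in distinct classes (they coincide only when $J = \{0, \dots, n\}$, which is excluded). Writing~$r \geq 2$ for the total number of classes, a direct verification then gives $p^{-1}(A_J) \cong \Ybar \times \Xbar^{r-2} \times \Ybar$ and $(p \circ i_Y)^{-1}(A_J) \cong \Xbar^{r-2}$, the restriction map being the one induced by~$(\tilde a_Y, -, \tilde a_Y)$.

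Finally, I~would apply the \'etale K\"unneth formula over~$\kbar$ and pass to the direct limit: each~$\Ybar$ in the tower is connected so $H^0(\Ybar, \Lambda) = \Lambda$, while $\varinjlim_{\Ybar} H^i(\Ybar, \Lambda) = 0$ for $i \geq 1$ by Definition~\ref{defn2.1}. Since filtered colimits commute with derived tensor products, the limit of $H^m(\Ybar \times \Xbar^{r-2} \times \Ybar, \Lambda)$ collapses to $H^m(\Xbar^{r-2}, \Lambda)$, and the restriction at $(\tilde a_Y, -, \tilde a_Y)$ realises this identification. The~main technical obstacle is to set up the K\"unneth formula cleanly for \'etale cohomology with torsion coefficients --- in particular handling the possible $\mathrm{Tor}$-terms when~$N$ is composite and ensuring compatibility with the filtered colimit; once this is in place, the argument mirrors Proposition~\ref{propcorrectcohomology} step by step.
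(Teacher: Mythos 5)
Your proof is correct and is precisely the argument the paper intends: the paper omits the proof of this proposition, stating only that it is ``entirely analogous'' to that of Proposition~\ref{propcorrectcohomology}, and your sketch is exactly that argument transposed to the \'etale setting (restriction along the compatible lifts of~$a$, reduction via the terms $\Lambda_{A_J}$ with $J\varsubsetneq\{0,\dots,n\}$ to constant coefficients, then K\"unneth together with $\varinjlim H^i(\Ybar,\Lambda)=0$ for $i\ge 1$).
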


We now turn to the $\Lambda$\nobreakdash-module dual to $\H^n(\Xbar^n,\sB^e(n)_{aa})$.

\begin{defn}\label{defn3.5}
Let $n \geq 1$.
We define an object~$\sC(n)$ in the bounded derived category of \'etale sheaves of $\Lambda$\nobreakdash-modules on $X \times X^n \times X$ by the formula
$$
\sC(n)=\tau_{\le n(2d-1) } Rj_\star\Lambda(nd)
$$
where~$j$ denotes the open immersion $j \colon (X \times X^n \times X) \setminus (\bigcup_{i=0}^n A_i) \hookrightarrow X \times X^n \times X$.
For any \'etale cover $\pi \colon Y \rightarrow X$, we set $\sC(n)(Y)=p^\star \sC(n)$.
\end{defn}

For later use, we note that $R^qj_\star\Lambda(nd)=0$ if~$q$ is not divisible by $2d-1$ and
that $$R^qj_\star\Lambda(nd)=\bigoplus_{J \subseteq \{0,\dots,n\}, \#J=m} \Lambda_{A_J}((n-m)d)$$
if $q=m(2d-1)$ for some integer~$m$.  In particular there are natural distinguished triangles
\begin{equation}
\label{dt1}
\xymatrix@C=3ex{
\tau_{\le (n-1)(2d-1)} Rj_\star \Lambda(nd) \ar[r] & \sC(n) \ar[r] & \displaystyle\bigoplus_{J \subseteq \{0,\dots,n\}, \#J=n} \Lambda_{A_J}[-n(2d-1)] \ar[r]^(.9){+1} &
}
\end{equation}
and
\begin{equation}
\label{dt2}
\xymatrix@C=3ex{
\sC(n) \ar[r] & Rj_\star \Lambda(nd) \ar[r] & \Lambda_{A_{\{0,\dots,n\}}}(-d)[-(n+1)(2d-1)] \ar[r]^(.85){+1} & {\;\;\text{.}}
}
\end{equation}

The following proposition shows that in the algebraic setting,
the $n$\nobreakdash-th hypercohomology group of $\sC(n)$ twisted by~$2d$ plays a r\^ole
analogous to that of the vector space $\Q[\pitop(X,a)]/I^{n+1}$ in Beilinson's original construction.  We shall not pursue this analogy further.
Instead, we shall use the complexes~$\sC(n)$   in~\textsection\ref{sub3.4} to define liftings of the class $\alpha(\Lambda)$ associated in Theorem~\ref{thm2.5} to a section
of $\pi_1(X,a) \rightarrow G_k$.

\begin{prop}\label{prop3.6}
Assume~$X$ is a~\kpi.  For any $a \in X(\kbar)$,
the $\Lambda$\nobreakdash-module
$$\varprojlim \H^{2(n+2)d-n}_c(\Ybar\times  \Xbar^n\times  \Ybar, \sC(n)(\Ybar) \otimes_\Lambda \Lambda(2d))\text{,}$$
where the inverse limit ranges over all factorisations
$\Xtilde_a \rightarrow \Ybar \rightarrow \Xbar$
of~$\pitilde_a$ with~$\Ybar$ finite over~$\Xbar$,
and where the transition maps are the trace maps,
is canonically dual
to
$\H^n(\Xbar^n,\sB^e(n)_{aa})$.
\end{prop}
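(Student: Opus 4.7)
The plan has three steps: reduce via Proposition~\ref{prop3.4} to a finite-level duality statement, apply Poincar\'e--Verdier duality on the smooth variety $V_Y := \Ybar \times \Xbar^n \times \Ybar$, and identify the Verdier dual of $\sB^e(n)(\Ybar)$ with a twist of $\sC(n)(\Ybar)$. By Proposition~\ref{prop3.4}, it suffices to show, for every factorisation $\Xtilde_a \to \Ybar \to \Xbar$ of~$\pitilde_a$ with $\Ybar$ finite over $\Xbar$, that $\H^n(V_Y, \sB^e(n)(\Ybar))$ and $\H^{2(n+2)d - n}_c(V_Y, \sC(n)(\Ybar) \otimes \Lambda(2d))$ are canonically $\Lambda$\nobreakdash-dual, in a way that intertwines pullback along $\Ybar' \to \Ybar$ with the corresponding trace map. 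Since $V_Y$ is smooth of pure dimension $(n+2)d$ over $\kbar$ and $N$ is invertible in~$k$, Poincar\'e--Verdier duality yields a canonical isomorphism
$$ \H^n(V_Y, \sB^e(n)(\Ybar))^\vee \cong \H^{2(n+2)d - n}_c\bigl(V_Y, R\mathcal{H}om_{V_Y}(\sB^e(n)(\Ybar), \Lambda((n+2)d))\bigr), $$
and combining with Proposition~\ref{prop3.4} and the fact that finite $\Lambda$-duality exchanges direct and inverse limits then gives the stated inverse-limit duality.

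The key step is to identify, in the bounded derived category on $V_Y$,
\begin{equation}\label{theid}
R\mathcal{H}om_{V_Y}\bigl(\sB^e(n)(\Ybar), \Lambda((n+2)d)\bigr) \,\cong\, \sC(n)(\Ybar) \otimes_\Lambda \Lambda(2d) \;=\; \tau_{\le n(2d-1)} Rj_\star \Lambda((n+2)d),
\end{equation}
where $j\colon V_Y \smallsetminus \bigcup_i A_i^Y \hookrightarrow V_Y$ is the open immersion and $A_J^Y$ denotes the preimage of $A_J$ in $V_Y$. To prove~\eqref{theid}, I would first observe that $\sB^e(n)(\Ybar)$ is the stupid truncation in degrees $\le n$ of the \v{C}ech-type resolution $[\Lambda_{V_Y} \to \bigoplus_{|J|=1} \Lambda_{A_J^Y} \to \cdots \to \Lambda_{A_{\{0,\dots,n\}}^Y}]$ of $j_!\Lambda$, which yields a canonical distinguished triangle $j_!\Lambda \to \sB^e(n)(\Ybar) \to \Lambda_{A_{\{0,\dots,n\}}^Y}[-n] \xrightarrow{+1}$. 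Applying $R\mathcal{H}om(-,\Lambda((n+2)d))$ to it and invoking absolute purity for the smooth closed immersion $i\colon A_{\{0,\dots,n\}}^Y \hookrightarrow V_Y$ of codimension $(n+1)d$ (so that $i^!\Lambda((n+2)d) = \Lambda(d)[-2(n+1)d]$), together with the numerical identity $n - 2(n+1)d + 1 = -(n+1)(2d-1)$, produces after rotation the triangle
$$ R\mathcal{H}om(\sB^e(n)(\Ybar), \Lambda((n+2)d)) \to Rj_\star \Lambda((n+2)d) \to i_\star \Lambda(d)[-(n+1)(2d-1)] \xrightarrow{+1}. $$
Its second and third terms coincide with those of~(\ref{dt2}) twisted by $\Lambda(2d)$; moreover both connecting maps may be identified, up to a canonical isomorphism of the target, with the projection of $Rj_\star\Lambda((n+2)d)$ onto its top nonzero cohomology sheaf $R^{(n+1)(2d-1)}j_\star\Lambda((n+2)d) = i_\star\Lambda(d)$ recalled after Definition~\ref{defn3.5}. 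Hence the two triangles are isomorphic and~\eqref{theid} follows.

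The compatibility with transition maps is then formal: given a morphism $f\colon \Ybar' \to \Ybar$ between finite \'etale covers appearing in $\pi_s$, the induced morphism $V_{Y'} \to V_Y$ is finite, and the standard compatibility of Verdier duality with proper pushforward identifies pullback on the $\sB^e$-side with the trace on the $\sC$-side. The main obstacle is the identification~\eqref{theid}, and specifically the verification that the two residue-type maps $Rj_\star \Lambda((n+2)d) \to i_\star\Lambda(d)[-(n+1)(2d-1)]$ truly agree, which in the end rests on the compatibility between absolute purity and the canonical decomposition of $R^\bullet j_\star \Lambda((n+2)d)$; once~\eqref{theid} is in place, everything else is a routine application of the six-functor formalism for torsion \'etale sheaves on smooth varieties over~$\kbar$.
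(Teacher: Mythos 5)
Your argument is correct and follows essentially the same route as the paper: the paper's proof also rests on the identification $\sC(n)=R\mathcal{H}\text{om}(\sB^e(n),\Lambda(nd))$, obtained by applying $R\mathcal{H}\text{om}(-,\Lambda(nd))$ to the distinguished triangle $\Lambda_{A_{\{0,\dots,n\}}}[-n-1]\to j_!\Lambda\to\sB^e(n)\xrightarrow{+1}$ and recognizing the result as~(\ref{dt2}), followed by Poincar\'e duality and Proposition~\ref{prop3.4}. Your version merely carries the extra twist $\Lambda(2d)$ through explicitly and spells out the purity computation and the comparison of connecting maps that the paper leaves implicit.
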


\begin{proof}
There is a canonical quasi-isomorphism $\sC(n)=R\mathcal{H}\text{om}(\sB^e(n),\Lambda(nd))$.  (More generally, applying $R\mathcal{H}\text{om}(-,\Lambda(nd))$ to the
distinguished triangle
$$
\xymatrix{
\Lambda_{A_{\{0,\dots,n\}}}[-n-1] \ar[r] & j_! \Lambda \ar[r] & \sB^e(n)  \ar[r]^(.7){+1} &
}
$$
yields~(\ref{dt2}).)  Hence, by Poincar\'e duality, a perfect pairing of $\Lambda$\nobreakdash-modules
$$
\H^{2(n+2)d-n}_c(\Ybar\times  \Xbar^n\times  \Ybar, \sC(n)(\Ybar)) \times \H^n(\Ybar \times \Xbar^n \times \Ybar,\sB^e(n)(\Ybar)) \longrightarrow \Lambda(-2d) \text{.}
$$
for any \'etale cover $\Ybar \rightarrow \Xbar$.  We conclude by applying Proposition~\ref{prop3.4}.
\end{proof}

\subsection{Liftings of the \texorpdfstring{classes $\alpha(\Lambda)$}{cycle classes}}\label{sub3.4}
The aim of~\textsection\ref{sub3.4} is to use the definitions of~\textsection\ref{sub3.3} to construct liftings of the classes $\alpha(\Lambda)$ associated in Theorem~\ref{thm2.5} to sections
of $\pi_1(X,x) \rightarrow G_k$.
We retain the notations and hypotheses of~\textsection\ref{sub3.3}.  In addition, we assume throughout that~$X$ is a~\kpi{}
and we fix a section~$s$ of the natural map $\pi_1(X,x) \rightarrow G_k$ for some $x \in X(\kbar)$.
For simplicity we also assume that~$X$ is proper.

First we remark that~(\ref{dt1}) yields, for $n=1$, a map
$$
H^0(A_0,\Lambda) \oplus H^0(A_1,\Lambda) \longrightarrow H^{2d}(X \times X \times X, \Lambda(d))
$$
and hence two classes in $H^{2d}(X\times X \times X,\Lambda(d))$.
Concretely, these two classes are the classes of the algebraic cycles $A_i \subset X \times X \times X$ for $i\in\{0,1\}$.
Let $c_0$, $c_1$ denote their images in
$\varinjlim H^{2d}(Y \times X \times Y,\Lambda(d))$,
where the direct limit ranges over all
factorisations $X_s \rightarrow Y \rightarrow X$ of $\pi_s \colon X_s \rightarrow X$ with~$Y$ finite over$~X$.
As in Proposition~\ref{prop2.2}, we have $\varinjlim H^{2d}(Y \times X \times Y,\Lambda(d)) = H^{2d}(X,\Lambda(d))$.
Now it~follows from the construction of~$\alpha(\Lambda)$ that~$c_0$ and~$c_1$ coincide, via this equality, with $\alpha(\Lambda) \in H^{2d}(X,\Lambda(d))$.
Hence $\alpha(\Lambda)$ can be read off of~(\ref{dt1}) for $n=1$; which begs us to also consider this triangle for~$n>1$.

\begin{defn}
Let $n \geq 1$.
We define~$\sC'(n)$ by the formula
$$
\sC'(n)=\tau_{\le n(2d-1) } Rj'_\star\Lambda(nd)
$$
where~$j'$ denotes the open immersion 
 $j' \colon (X \times X^n \times X) \setminus (\bigcup_{i=0}^{n-1} A_i) \hookrightarrow X \times X^n \times X$.
For any \'etale cover $\pi \colon Y \rightarrow X$, we set $\sC'(n)(Y)=p^\star \sC'(n)$.
\end{defn}

The complexes~$\sC(n)$ and~$\sC'(n)$ are related by a distinguished triangle
\begin{equation}
\label{dt3}
\xymatrix{
\sC'(n) \ar[r] & \sC(n) \ar[r]^(.27){\mathrm{res}} & i_\star \sC(n-1)[-(2d-1)] \ar[r]^(.82){+1} &
}
\end{equation}
where~$i$ denotes the closed immersion $i \colon X \times X^{n-1} \times X \simeq A_n \hookrightarrow X \times X^n \times X$
and $\mathrm{res}$ is the residue map.  (We take the convention that $\sC(0)=\Lambda$.)

\begin{defn}
For any $n \geq 1$, we define
$$c_{1,\ldots,n} \in \varinjlim \H^{n(2d-1)+1}(Y\times X^n\times Y, \tau_{\le (n-1)(2d-1)}p^\star Rj_\star\Lambda(nd))\text{,}$$
where the direct limit ranges over all factorisations $X_s \rightarrow Y \rightarrow X$ of $\pi_s \colon X_s \rightarrow X$ with~$Y$ finite over~$X$,
to be the class of the image of~$1$ by the map $$H^0(A_{\{1,\dots,n\}},\Lambda) \longrightarrow 
\H^{n(2d-1)+1}(X\times X^n\times X, \tau_{\le (n-1)(2d-1)}Rj_\star\Lambda(nd))$$ stemming from~(\ref{dt1}).
\end{defn}

\begin{prop}\label{prop3.11}
The class $c_{1,\dots,n}$ is a lifting of~$\alpha(\Lambda) \in H^{2d}(X,\Lambda(d))$ (by an ``iterated residue'' map).
\end{prop}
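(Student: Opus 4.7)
I would argue by induction on $n$, realizing the ``iterated residue'' as a composition of $n-1$ elementary residue maps. The base case $n=1$ is immediate: the class $c_1$ lives in $\varinjlim H^{2d}(Y \times X \times Y, \Lambda(d))$, which by Proposition~\ref{prop2.2} is canonically $H^{2d}(X,\Lambda(d))$, and the discussion preceding the definition of $c_{1,\dots,n}$ already identifies $c_1$ with $\alpha(\Lambda)$ under this isomorphism. So nothing need be done in that case.

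For the inductive step, I would build an elementary residue dropping one level. The distinguished triangle~(\ref{dt3}) provides a residue $\mathrm{res}\colon \sC(n)\to i_\star\sC(n-1)[-(2d-1)]$, where $i\colon X\times X^{n-1}\times X\simeq A_n \hookrightarrow X\times X^n\times X$ is the inclusion of the last diagonal. By definition of $\sC(n)$ and~$\sC(n-1)$, this restricts to a morphism
\begin{equation*}
\tau_{\le(n-1)(2d-1)} Rj_\star\Lambda(nd) \longrightarrow i_\star\tau_{\le(n-2)(2d-1)} Rj_\star\Lambda((n-1)d)[-(2d-1)].
\end{equation*}
Pulling back by~$p$, taking $\H^{n(2d-1)+1}$, and passing to the direct limit over~$Y$ (identifying $p^{-1}(A_n)$ with $Y\times X^{n-1}\times Y$) yields a map from the group containing $c_{1,\dots,n}$ to the group containing $c_{1,\dots,n-1}$. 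The iterated residue of the statement is then the composition of $n-1$ such maps.

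The core of the argument is to verify that this elementary residue sends $c_{1,\dots,n}$ to~$\pm c_{1,\dots,n-1}$. Both classes are defined as the images under the connecting morphism in~(\ref{dt1}) of the element $1$ in $H^0(A_{\{1,\dots,n\}},\Lambda)$ and $H^0(A_{\{1,\dots,n-1\}},\Lambda)$ respectively, so one needs the commutativity of a diagram relating~(\ref{dt1}) at levels~$n$ and~$n-1$ through the residue~(\ref{dt3}). Since $A_{\{1,\dots,n\}} = A_{\{1,\dots,n-1\}} \cap A_n$ transversally, working locally at the generic point of $A_{\{1,\dots,n\}}$ reduces this to the standard functoriality of the Gysin/residue construction under transverse intersection of diagonals in normal-crossing position. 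Iterating $n-1$ times reduces to the base case and completes the proof.

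The main obstacle is precisely this compatibility verification, and in particular controlling the contributions of the other summands $\Lambda_{A_J}$ (with $\#J=n$ and $J\ne\{1,\dots,n\}$) in~(\ref{dt1}) at level~$n$. Here one has to separate the cases $n\in J$ and $n\notin J$: in the latter the residue along $A_n$ vanishes on a sheaf that is not supported over the generic point of $A_n$, while in the former a careful tracking of the truncation filtration shows that such summands land in a strictly deeper piece than the one that targets~$c_{1,\dots,n-1}$. Once this bookkeeping is in place, the rest of the argument is formal.
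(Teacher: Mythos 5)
Your proposal is correct and follows essentially the same route as the paper: a downward induction in which the residue map coming from the triangle~(\ref{dt3}) fits into a commutative square with the connecting maps of~(\ref{dt1}) at levels $n$ and $n-1$, sending $1\in H^0(A_J,\Lambda)$ to $1\in H^0(A_{J\setminus\{n\}},\Lambda)$ when $n\in J$ and to $0$ otherwise, so that $c_{1,\dots,n}\mapsto c_{1,\dots,n-1}$ and the induction terminates at $c_1=\alpha(\Lambda)$. Your extra bookkeeping for the summands $\Lambda_{A_J}$ with $J\neq\{1,\dots,n\}$ is harmless but not needed for the residue of $c_{1,\dots,n}$ itself, since that class is by definition the image of $1$ from the single summand $A_{\{1,\dots,n\}}$.
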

\begin{proof}
Let $n \geq 2$.
For any $Y \rightarrow X$ appearing in~$\pi_s$, the residue map in~(\ref{dt3}), together with~(\ref{dt1}), gives rise to a commutative square
$$
\mbox{
\kern-1cm
\xymatrix{
\ar[d] \displaystyle\bigoplus_{J \subseteq \{0,\dots,n\},\#J=n} H^0(A_J, \Lambda) \ar[r] & \ar[d] \H^{n(2d-1)+1}(Y\times X^n\times Y, \tau_{\le(n-1)(2d-1)}p^\star Rj_\star\Lambda(nd))\\ 
\displaystyle\bigoplus_{J \subseteq \{0,\dots,n-1\},{\#J=n-1}} \mkern-32mu H^0(A_J, \Lambda)\ar[r] & \H^{(n-1)(2d-1)+1}(Y\times X^{n-1}\times Y, \tau_{\le(n-2)(2d-1)}p^\star Rj_\star\Lambda((n-1)d))
}
}
$$
where the vertical map on the left sends $1 \in H^0(A_J,\Lambda)$
to $1 \in H^0(A_{J \setminus \{n\}},\Lambda)$ if $n \in J$, to~$0$ otherwise,
and the vertical map on the right is the residue along $Y \times X^{n-1} \times Y = p^{-1}(A_n) \subset Y \times X^n \times Y$.
As a consequence, for every $n \geq 2$, the class $c_{1,\dots,n}$ maps to $c_{1,\dots,n-1}$ by the residue map.  Since $c_1=\alpha(\Lambda)$, this proves the proposition.
\end{proof}

As an example, let us consider the case $n=2$ in more detail.
The class $c_{1,2}$ lives in $L:=\varinjlim\H^{4d-1}(Y\times X^2\times Y, \tau_{\le 2d-1}p^\star Rj_\star\Lambda(2))$.
One may consider its residues along three subvarieties of $\varprojlim (Y \times X^2 \times Y)$, namely $\varprojlim p^{-1}(A_i)$ for $i \in \{0,1,2\}$.
These three residue maps fit into the exact sequence
\begin{equation}
\label{sedern}
\xymatrix{
L \ar[r] & \displaystyle\bigoplus_{i\in\{0,1,2\}} H^{2d}(X,\Lambda(d)) \ar[r] &
H^{4d}(X^2,\Lambda(2d))
}
\end{equation}
obtained by applying $\varinjlim\H^{4d-1}(Y \times X^2\times Y,p^\star-)$ to the distinguished triangle
$$
\xymatrix{
\tau_{\le 2d-1}Rj_\star \Lambda(2d) \ar[r] & \displaystyle\bigoplus_{i\in\{0,1,2\}} \Lambda_{A_i}(d)[-(2d-1)] \ar[r] & \Lambda(2d)[1] \ar[r]^(.63){+1} & {\;\;\text{.}}
}
$$
A simple calculation reveals that up to a sign, the image of~$c_{1,2}$ in the middle group of~(\ref{sedern}) is $(0,-\alpha,\alpha)$
and the second map of~(\ref{sedern}) is given by $(x,y,z) \mapsto \alpha \boxtimes x + \Delta_\star y + z \boxtimes \alpha \in H^{4d}(X^2,\Lambda(2d))$.
Here~$\alpha$ stands
for~$\alpha(\Lambda)$, and $\boxtimes$ and $\Delta_\star$ respectively denote the exterior product and the Gysin map associated to the
diagonal embedding $X \subset X^2$.  Since the image of~$c_{1,2}$ in the right-hand side group of~(\ref{sedern}) vanishes,
we deduce that the cycle class~$\alpha$ satisfies the nontrivial relation
\begin{equation}
\label{boxtimesgysin}
\alpha \boxtimes \alpha = \Delta_\star \alpha
\end{equation}
in $H^{4d}(X^2,\Lambda(2d))$.  Hence we see that the class $c_{1,2}$ contains more information about~$\alpha$ than~$c_1$ alone.
More generally, one might hope to use all of the liftings of~$\alpha$ defined in Proposition~\ref{prop3.11} in order to ``rigidify'' the $0$\nobreakdash-cycle of degree~$1$ constructed
up to linear equivalence in Proposition~\ref{propkoenig} (so as to show that it lies in $X(k)\subset \Pic^1(X)$).

\begin{rmks}
(i) Cycle classes of rational points satisfy~(\ref{boxtimesgysin});
the situation for cycle classes in $H^{2d}(X,\Lambda(d))$ of arbitrary $0$\nobreakdash-cycles of degree~$1$
is less clear.

(ii) Assume~$X$ is a curve.  Denote by $[\Delta] \in H^2(X^2,\Lambda(1))$ the class of the diagonal and by $\omega \in H^2(X,\Lambda(1))$ the class of the canonical
sheaf.  Then $\Delta_\star \alpha = p^\star\alpha \cup [\Delta]$,
where $p \colon X \times X \rightarrow X$ denotes the first projection, and hence $\Delta^\star \Delta_\star \alpha = \alpha \cup (-\omega)$
by adjunction.  Applying~$\Delta^\star$ to~(\ref{boxtimesgysin}) therefore yields the equality $\alpha \cup (\alpha + \omega)=0$,
which we had already encountered in Lemma~\ref{lemmaalphaomega} (see Remark~\ref{rmks37}~(i)).
\end{rmks}

\bibliographystyle{plain}
\renewcommand\refname{References}

\end{document}